\documentclass[a4paper,12pt]{article}

\usepackage{amsmath}
\usepackage{amsthm}
\usepackage{enumerate}
\usepackage{amssymb,amsfonts,latexsym,mathtools}
\usepackage{mathrsfs}
\usepackage{bm}
\usepackage{cases}
\usepackage{color}

\newcommand{\Levy}{L\'{e}vy}

\newcommand{\R}{\mathbb{R}}
\newcommand{\F}{\mathscr{F}}

\newcommand{\e}{\varepsilon}

\renewcommand{\P}{\mathbb{P}}
\usepackage{multirow}

\numberwithin{equation}{section}

\makeatletter
\renewcommand\section{\@startsection {section}{1}{\z@}%
{-3.5ex \@plus -1ex \@minus -.2ex}%
{2.3ex \@plus.2ex}%
{\normalfont\large\bf}}
\makeatother

\makeatletter
\renewcommand\subsection{\@startsection {subsection}{1}{\z@}%
{-3.5ex \@plus -1ex \@minus -.2ex}%
{2.3ex \@plus.2ex}%
{\normalfont\normalsize\bf}}
\makeatother

\theoremstyle{plain}
\newtheorem{thm}{Theorem}[section]
\newtheorem{lem}[thm]{Lemma}

\theoremstyle{definition}
\newtheorem{Rem}[thm]{Remark}
\newtheorem{Exa}[thm]{Example}
\allowdisplaybreaks

\usepackage[dvipdfmx]{hyperref}
\hypersetup{
setpagesize=false,
 bookmarksnumbered=true,
 bookmarksopen=true,
 colorlinks=true,
 linkcolor=blue,
 citecolor=red,
}

\begin{document}
\begin{center}
\Large \textbf{Hitting Probabilities of Finite Points for One-Dimensional \Levy\ Processes}
\end{center}
\begin{center}
Kohki IBA\footnote{Affiliation: Graduate School of Science, The University of Osaka, Osaka, Japan.}\textsuperscript{,}\footnote{E-mail: kohki.iba [at] gmail.com}
\end{center}
\begin{abstract}
For a one-dimensional \Levy\ process, we derive an explicit formula for the probability of first hitting a specified point among a fixed finite set. Moreover, using this formula, we obtain an explicit expression for each entry of the $Q$-matrix of the trace process on the finite set. These formulas involve solely the renormalized zero resolvent.
\end{abstract}


\section{Introduction}
Let us consider the probability that a stochastic process $X$ starting from $x$ hits $a$ before $b$. For a Brownian motion, it is well-known (see, e.g., Theorem 7.5.3 of \cite{Du}) that this probability is given as:
\begin{align}
\label{AAA1}
  \P_x(T_a<T_b)=\frac{b-x}{b-a}\qquad \text{for}\ a<x<b,
\end{align}
where $\P_x$ denotes the distribution of a Brownian motion started at $x$, and $T_a$ denotes the hitting time of $a$, that is, $T_a:=\inf \{t\ge 0;\ X_t=a\}$.

Now, we consider the multi-point generalization of this formula (\ref{AAA1}). Let $A_n:=\{a_1,...,a_n\}$ be a finite set. Let $T_{A_n}$ denote the hitting time of $A_n$, that is, $T_{A_n}:=\inf \{t\ge 0;\ X_t\in A_n\}$. We consider the hitting probability 
\begin{align}
\label{a1}
  \P_{x}(T_{a_i}=T_{A_n})
\end{align}
that the process starting from $x$ first hits $a_i$ among $A_n$. In the case of Brownian motion, by the continuity of the sample paths, it is meaningless to consider the hitting probabilities for $n\ge 3$. However, for a general point-regular \Levy\ process, the presence of jumps makes the hitting probabilities meaningful even for $n\ge 3$. In this paper, we characterize the hitting probabilities in term of the renormalized zero resolvent.

Let $(X_t)_{t\ge 0}$ be a one-dimensional \Levy\ process. We consider the following conditions:
\begin{enumerate}
  \item[] \textbf{(A)} For the characteristic exponent $\Psi$ of $(X_t)_{t\ge 0}$, it holds that
  \begin{align}
    \int_0^\infty \left|\frac{1}{q+\Psi(\lambda)}\right|d\lambda<\infty\qquad \text{for}\ q>0.
  \end{align}
  \item[] \textbf{(A1)} The process $(X_t)_{t\ge 0}$ is not a compound Poisson process.
  \item[] \textbf{(A2)} $0$ is regular for itself.
\end{enumerate}
Note that the condition \textbf{(A)} implies conditions \textbf{(A1)} and \textbf{(A2)}. See, e.g., page 7 of \cite{TY}. We recall a previous result on the probability (\ref{a1}) for \Levy\ processes in the two-point case.

\begin{thm}[Lemma 3.5 and Lemma 9.3 of Takeda-Yano \cite{TY}]
\label{TY1}
We assume that either of the following two conditions is satisfied:
\begin{enumerate}
  \item[] (a) The process $(X_t)_{t\ge 0}$ is recurrent and satisfies the condition \textbf{(A)}.
  \item[] (b) The process $(X_t)_{t\ge 0}$ is transient and satisfies conditions \textbf{(A1)} and \textbf{(A2)}.
\end{enumerate}
Then, the following hold: 
\begin{enumerate}
  \item If the condition (a) is satisfied, then 
  \begin{align}
    \P_x(T_a<T_b)=\frac{h(b-a)+h(x-b)-h(x-a)}{h(a-b)+h(b-a)},
  \end{align}
  where $h$ is the renormalized zero resolvent defined in (\ref{h}). 
  \item If the condition (b) is satisfied, then 
  \begin{align}
    \P_x(T_a<T_b)=\frac{h(b-a)+h(x-b)-h(x-a)-\kappa h(x-b)h(b-a)}{h(a-b)+h(b-a)-\kappa h(a-b)h(b-a)},
  \end{align}
  where $h$ is the renormalized zero resolvent defined in (\ref{h2}) and $\kappa$ is defined in (\ref{k}). 
\end{enumerate}
\end{thm}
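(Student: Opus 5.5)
Since this is a previously established result (Takeda--Yano), I would reprove it through the standard $q$-resolvent route and then let $q\downarrow 0$. For $q>0$ let $u_q$ be the resolvent density, $u_q(x)=\frac{1}{\pi}\int_0^\infty \mathrm{Re}\,\frac{e^{-i\lambda x}}{q+\Psi(\lambda)}\,d\lambda$, which is bounded and continuous under \textbf{(A)} (or \textbf{(A1)}--\textbf{(A2)}). The basic identity, valid because points are regular, is
\begin{align*}
E_x[e^{-qT_y}]=\frac{u_q(y-x)}{u_q(0)}.
\end{align*}
Writing $F_q(x;a,b):=E_x[e^{-qT_a};T_a<T_b]$, the strong Markov property at $T_{\{a,b\}}$ gives the linear system
\begin{align*}
\frac{u_q(a-x)}{u_q(0)}&=F_q(x;a,b)+F_q(x;b,a)\,\frac{u_q(a-b)}{u_q(0)},\\
\frac{u_q(b-x)}{u_q(0)}&=F_q(x;b,a)+F_q(x;a,b)\,\frac{u_q(b-a)}{u_q(0)}.
\end{align*}
Since $u_q(a-b)u_q(b-a)<u_q(0)^2$ (the process started at $a$ returns to $a$ with probability less than one before an independent exponential time), Cramer's rule yields
\begin{align*}
F_q(x;a,b)=\frac{u_q(0)u_q(a-x)-u_q(a-b)u_q(b-x)}{u_q(0)^2-u_q(a-b)u_q(b-a)}.
\end{align*}

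Next I pass to $q\downarrow 0$, using $h_q(x):=u_q(0)-u_q(-x)$, so that $u_q(y)=u_q(0)-h_q(-y)$, with $h_q\to h$ pointwise. Substituting, the $u_q(0)^2$ terms cancel in both numerator and denominator. The numerator becomes
\begin{align*}
u_q(0)\bigl[h_q(b-a)+h_q(x-b)-h_q(x-a)\bigr]-h_q(b-a)h_q(x-b),
\end{align*}
and the denominator becomes
\begin{align*}
u_q(0)\bigl[h_q(a-b)+h_q(b-a)\bigr]-h_q(a-b)h_q(b-a).
\end{align*}
Dividing both by $u_q(0)$ and letting $q\downarrow0$, monotone convergence gives $F_q\to\P_x(T_a<T_b)$ on the left. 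In case (a), recurrence gives $u_q(0)\to\infty$, so the product terms vanish after dividing, and formula 1 follows. In case (b), transience gives $u_q(0)\to u(0)<\infty$, and the product terms survive with coefficient $1/u(0)$; this is $\kappa$ as defined in (\ref{k}), and formula 2 follows.

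The main obstacle is the convergence $h_q\to h$ together with the finiteness and nondegeneracy of the limits. In the recurrent case this requires \textbf{(A)}: one writes $h_q(x)=\frac1\pi\int_0^\infty \mathrm{Re}\,\frac{1-e^{i\lambda x}}{q+\Psi(\lambda)}\,d\lambda$ and uses dominated convergence near $\lambda=0$, via the bound $|1-e^{i\lambda x}|\le\lambda|x|$ and $\mathrm{Re}\,\Psi\ge0$, plus an integrability estimate from recurrence. This is exactly the existence of the renormalized zero resolvent (\ref{h}), which I would cite rather than reprove. In the transient case the limit is simply $u(0)-u(-x)$, which is finite. Finally, I would check that the limiting denominator is positive, namely $h(a-b)+h(b-a)>0$ in case (a) and the corresponding quantity in case (b). This follows because the limiting denominator equals the limit of $(u_q(0)^2-u_q(a-b)u_q(b-a))/u_q(0)$, and its positivity reflects $\P_a(T_b<\infty)\P_b(T_a<\infty)<1$, or $h$ being non-identically zero in the recurrent case.
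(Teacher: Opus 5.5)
The paper gives no proof of this statement; it is quoted from Takeda--Yano (Lemmas 3.5 and 9.3), so your argument has to stand on its own. Most of it does. The identity $\P_x[e^{-qT_y}]=u_q(y-x)/u_q(0)$, the $2\times 2$ system from the strong Markov property at $T_{\{a,b\}}$, Cramer's rule, the substitution $u_q(y)=u_q(0)-h_q(-y)$, and the limits $u_q(0)\to\infty$ resp.\ $1/u_q(0)\to\kappa$ are all correct, and they reproduce exactly the two displayed formulas.

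Your sketch of $h_q\to h$ would not work as written. The bounds $|1-e^{i\lambda x}|\le\lambda|x|$ and $|q+\Psi(\lambda)|\ge|\Psi(\lambda)|$ only give a dominating function of order $1/\lambda$ near $0$, and this already fails for Brownian motion. Citing Theorem 1.1 of Takeda--Yano is legitimate, since the paper takes the existence of $h$ from there.

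\textbf{Gap: the limiting denominator is never shown to be nonzero.} You need this to pass to the limit in the quotient, and your stated reasons do not establish it.
\begin{itemize}
\item In case (a), every point is hit almost surely, since $1-\P_x[e^{-qT_0}]=h_q(x)/u_q(0)\to 0$. So $\P_a(T_b<\infty)\P_b(T_a<\infty)=1$ there, and your first reason is false.
\item Also in case (a), knowing $h\not\equiv 0$ says nothing about the particular value $h(a-b)+h(b-a)$.
\item In case (b), the inequality $\P_a(T_b<\infty)\P_b(T_a<\infty)<1$ is true, but you give no argument for it.
\end{itemize}

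\textbf{Fix: read the normalized denominator as a local time.} Apply the strong Markov property at $T_b$ to $\int_0^\infty e^{-qt}\,dL^a_t$ and use (\ref{B3}):
\begin{align*}
u_q(0)=\P_a\left[\int_0^{T_b}e^{-qt}\,dL^a_t\right]+\frac{u_q(b-a)}{u_q(0)}\,u_q(a-b).
\end{align*}
Hence
\begin{align*}
\frac{u_q(0)^2-u_q(a-b)u_q(b-a)}{u_q(0)}=\P_a\left[\int_0^{T_b}e^{-qt}\,dL^a_t\right]\uparrow \P_a\left[L^a_{T_b}\right]\qquad (q\downarrow 0).
\end{align*}
This limit is strictly positive. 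Because $a$ is regular for itself, $L^a_t>0$ for all $t>0$ $\P_a$-a.s. By right-continuity, $T_b>0$ $\P_a$-a.s. It is also finite, because your own computation identifies it with $h(a-b)+h(b-a)$, resp.\ $h(a-b)+h(b-a)-\kappa h(a-b)h(b-a)$.

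This argument covers both cases at once. It also recovers the identity $h^B(b-a)=\P_a[L^a_{T_b}]$ of Theorems \ref{B1} and \ref{B2}. With it inserted, your proof is complete.
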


We extend this result to the multi-point case and obtain the probability (\ref{a1}).

\begin{thm}
\label{A1}
We assume the hypotheses of Theorem \ref{TY1} hold. Then, it holds that
\begin{align}
\label{eq1}
{\tiny
  \begin{pmatrix}
    \P_{x}(T_{a_1}=T_{A_{n}})\\
    \P_{x}(T_{a_2}=T_{A_{n}})\\
    \vdots\\
    \P_{x}(T_{a_{n-1}}=T_{A_{n}})
  \end{pmatrix}=
  \begin{pmatrix}
    1 & \P_{a_2}(T_{a_1}<T_{a_n}) & \cdots & \P_{a_{n-1}}(T_{a_1}<T_{a_n})\\
    \P_{a_1}(T_{a_2}<T_{a_n}) & 1  & \cdots &\P_{a_{n-1}}(T_{a_2}<T_{a_n})\\
    \vdots &\vdots &&\vdots \\
    \P_{a_1}(T_{a_{n-1}}<T_{a_n}) & \P_{a_2}(T_{a_{n-1}}<T_{a_n}) &\cdots & 1 \\
  \end{pmatrix}^{-1}
  \begin{pmatrix}
    \P_x(T_{a_1}<T_{a_n})\\
    \P_x(T_{a_2}<T_{a_n})\\
    \vdots\\
    \P_x(T_{a_{n-1}}<T_{a_n})\\
  \end{pmatrix}}.
\end{align}
Here, the above matrix is the $(n-1)\times (n-1)$ matrix obtained by deleting the $n$-th row and column from $(\P_{a_l}(T_{a_k}<T_{a_n}))_{k,l=1}^n$.
\end{thm}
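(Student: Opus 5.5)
The plan is to obtain a linear system for the unknowns $p_i(x):=\P_x(T_{a_i}=T_{A_n})$, $i=1,\dots,n-1$, from the strong Markov property, and then show that its coefficient matrix is invertible.

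Fix $k\in\{1,\dots,n-1\}$ and decompose the event $\{T_{a_k}<T_{a_n}\}$ according to which point of $A_n$ is hit first. Under condition (A1) two distinct points cannot be hit at the same time, so $T_{A_n}=T_{a_l}$ for exactly one $l$ on $\{T_{A_n}<\infty\}$.
\begin{itemize}
\item If $a_k$ is hit first, the event holds.
\item If $a_n$ is hit first, it fails.
\item If $a_l$ with $l\neq k,n$ is hit first, then $T_{a_k}$ and $T_{a_n}$ are both larger than $T_{A_n}$. Applying the strong Markov property at $T_{A_n}$, with $X_{T_{A_n}}=a_l$, the conditional probability of the event is $\P_{a_l}(T_{a_k}<T_{a_n})$.
\end{itemize}
On $\{T_{A_n}=\infty\}$ (possible in the transient case) neither point is hit, so the event fails there as well. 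This gives
\begin{align*}
\P_x(T_{a_k}<T_{a_n})=\sum_{l=1}^{n-1}\P_{a_l}(T_{a_k}<T_{a_n})\,p_l(x),
\end{align*}
where the term $l=k$ uses $\P_{a_k}(T_{a_k}<T_{a_n})=1$, since $T_{a_k}=0$ under $\P_{a_k}$. Hence the vector $(p_l(x))_l$ is sent to the right-hand side of (\ref{eq1}) by exactly the displayed matrix $M=(\P_{a_l}(T_{a_k}<T_{a_n}))_{k,l=1}^{n-1}$, and (\ref{eq1}) follows once $M$ is shown invertible.

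The main obstacle is invertibility. I would first try a probabilistic argument showing that $I-M$ is a strictly substochastic-type kernel, so that the Neumann series converges.

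If that is awkward, I would use the explicit form from Theorem \ref{TY1}. In case (a), write $\P_{a_l}(T_{a_k}<T_{a_n})=\bigl(h(a_n-a_k)+h(a_l-a_n)-h(a_l-a_k)\bigr)/c_k$ with $c_k=h(a_k-a_n)+h(a_n-a_k)>0$. After scaling rows, $M$ then becomes a matrix of the form $H_{kl}=h(a_n-a_k)+h(a_l-a_n)-h(a_l-a_k)$. This is the Gram-type matrix of the renormalized zero resolvent of the process killed at $a_n$, i.e.\ the Green function $G(a_l,a_k)$ of $X$ killed on hitting $a_n$, up to normalization by local times.

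Invertibility of $M$ would then follow from positive definiteness of the killed Green kernel. The symmetrization via the dual process gives $\sum_{k,l}\xi_k\xi_l G(a_l,a_k)>0$ for real $\xi\neq0$, which comes from the energy being positive because the killed resolvent is the inverse of a positive operator. The transient case (b) is handled identically with $\kappa$-corrected $h$.

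As a fallback, a direct argument would show that $M\xi=0$ forces $\sum_l\xi_l\P_x(T_{a_l}=T_{A_n})=0$ for all $x$. This gives a contradiction by taking $x=a_j$, although that step is circular unless existence is handled separately. I therefore expect the Green-kernel positivity route to be the one actually needed.
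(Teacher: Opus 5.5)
Your linear system is exactly the paper's (strong Markov property at $T_{A_n}$), so everything rests on the invertibility of $M=(\P_{a_l}(T_{a_k}<T_{a_n}))_{k,l=1}^{n-1}$. That step is not actually established.

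\begin{itemize}
\item \textbf{Neumann series.} This route fails. The off-diagonal entries of $M$ are probabilities of overlapping events, so $M-I$ need not be substochastic. For Brownian motion with $a_1<a_2<a_3<a_4$ and $a_4\to\infty$, every entry of $M$ tends to $1$, so $M-I$ has spectral radius close to $2$ and the series diverges.
\item \textbf{Fallback.} It is circular, as you note yourself.
\item \textbf{Positive definiteness.} This is only asserted, and the reason you give does not hold in that generality: a non-symmetric killed Green kernel need not satisfy $\xi^\top G\xi>0$. Take the two-state chain in which state $1$ jumps to $2$ at rate $1$, and state $2$ jumps to $1$ at rate $10$ and is killed at rate $1$. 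Then $-Q=\bigl(\begin{smallmatrix}1&-1\\-10&11\end{smallmatrix}\bigr)$ and $G=(-Q)^{-1}=\bigl(\begin{smallmatrix}11&1\\10&1\end{smallmatrix}\bigr)$. For $\xi=(1,-5)^\top$ one gets $\xi^\top G\xi=-19$, although $G$ is invertible.
\end{itemize}
For \Levy\ processes the positivity can in fact be proved; it comes from Lebesgue measure being invariant for both $X$ and its dual $-X$. But this is a nontrivial argument, and you would need \emph{strict} positivity, since nonnegativity of the quadratic form does not give invertibility. None of this is in the proposal.

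The missing idea is that no positivity is needed. You already observed that $c_kM_{kl}=\P_{a_l}[L^{a_k}_{T_{a_n}}]$, that is, $M=\mathrm{diag}(c_1^{-1},\dots,c_{n-1}^{-1})\,\Gamma^\top$ with $\Gamma_{lk}=\P_{a_l}[L^{a_k}_{T_{a_n}}]$. The paper identifies $\Gamma$ as the Green matrix of the finite-state chain $Z_u=\tilde X_{B_u^{-1}}$, where $B=L^{a_1}+\cdots+L^{a_{n-1}}$ and $\tilde X$ is $X$ killed at $T_{a_n}$. For any finite-state chain with a.s.\ finite lifetime, $P(t)\to 0$, so $Q_Z\Gamma=\int_0^\infty\frac{d}{dt}P(t)\,dt=-\bm{I}$. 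Hence $\Gamma$ is invertible with inverse $-Q_Z$. This uses no symmetry, duality, or explicit form of $h$. It carries over verbatim to the transient case, where only the diagonal factor changes (via Theorem~\ref{B2}).
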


The proof of this theorem will be given in Section \ref{S3}.

\begin{Rem}
\label{A2}
 By Theorem \ref{TY1}, the hitting probabilities $\P_x(T_{a_i}=T_{A_n})$ for $i=1,...,n$ can be represented solely by the renormalized zero resolvent $h$.
\end{Rem}

For a general Markov process, it is known that the probability (\ref{a1}) can be expressed in terms of the $Q$-matrix of the trace process of $X$ on $A_n$. In fact, for $i=1,...,n$, let $(L_t^{a_i})_{t\ge 0}$ denote the local time process of $a_i$ for $(X_t)_{t\ge 0}$ (of course, we impose suitable conditions on the process $X$ so that the local time exists) and let $A_t=\sum_{i=1}^n L_t^{a_i}$. Since $(A_t)_{t\ge 0}$ is a continuous additive functional, we can consider its right-continuous inverse $(A_u^{-1})_{u\ge 0}.$ We define
\begin{align}
  Y_u:=X_{A_u^{-1}}\qquad \text{for}\ u\ge 0.
\end{align}
It is known that this process, which is referred to as a \emph{trace process} or an \emph{embedded process}, is a continuous-time Markov chain on the finite state space $\{a_1,...,a_n\}$. If the process $(Y_u)_{u\ge 0}$ has a $Q$-matrix $\bm{Q}=(q_{i,j})_{i,j=1}^n$, then it holds that
\begin{align}
\label{a2}
  \P_{a_i}(T_{a_j}=T_{A_n\setminus \{a_i\}})=-\frac{q_{i,j}}{q_{i,i}}.
\end{align}
See, e.g., Proposition 2.8 of Section 1.2 of \cite{And}.

We assume the hypotheses of Theorem \ref{TY1}. Then, we can define a local time of $a\in \R$, so the trace process $(Y_u)_{u\ge 0}$ is also defined. Moreover, since the \Levy\ process $(X_t)_{t\ge 0}$ is right-continuous, so is $(Y_u)_{u\ge 0}$; hence the $Q$-matrix $\bm{Q}$ exists. By combining Theorem \ref{A1} and the equation (\ref{a2}), we obtain an explicit formula of each entry of the $Q$-matrix:

\begin{thm}
\label{A3}
We assume the hypotheses of Theorem \ref{TY1} hold. Then, it holds that
\begin{align}
\label{aa2}
  q_{i,j}=-q_{i,i}\P_{a_i}(T_{a_j}=T_{A_n\setminus \{a_i\}})
\end{align}
for $i,j=1,...,n$ with $i\neq j$ and
\begin{align}
\label{aa1}
 q_{i,i}={\tiny-\begin{pmatrix}
    1\\
    1\\
    \vdots\\
    1
  \end{pmatrix}^\top\begin{pmatrix}
    1 & \P_{a_2}(T_{a_1}<T_{a_i}) & \cdots & \P_{a_{n}}(T_{a_1}<T_{a_i})\\
    \P_{a_1}(T_{a_2}<T_{a_i}) & 1  & \cdots &\P_{a_{n}}(T_{a_2}<T_{a_i})\\
    \vdots &\vdots &&\vdots \\
    \P_{a_1}(T_{a_{n}}<T_{a_i}) & \P_{a_2}(T_{a_{n}}<T_{a_i}) &\cdots & 1 
  \end{pmatrix}^{-1}
  \begin{pmatrix}
    \bm{n}^{a_i}(T_{a_1}<T_{a_i})\\
    \bm{n}^{a_i}(T_{a_2}<T_{a_i})\\
    \vdots\\
    \bm{n}^{a_i}(T_{a_{n}}<T_{a_i})
  \end{pmatrix}}
\end{align}
for $i=1,...,n$, where $\bm{n}^{a_i}$ denotes the excursion measure away from $a_i$ under suitable normalization (see, Subsection \ref{Sub22}). Here, the above matrix is the $(n-1)\times (n-1)$ matrix obtained by deleting the $i$-th row and column from $(\P_{a_l}(T_{a_k}<T_{a_i}))_{k,l=1}^n$.
\end{thm}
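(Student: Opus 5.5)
The off-diagonal identity (\ref{aa2}) is just (\ref{a2}) rearranged, so the real work is the diagonal entry $q_{i,i}$. I would obtain it from excursion theory at $a_i$. Normalize the local time $L^{a_i}$ so that its excursion measure is $\bm{n}^{a_i}$, as in Subsection \ref{Sub22}. Under $\P_{a_i}$, the trace process $Y$ stays at $a_i$ until the first excursion away from $a_i$ that hits $A_n\setminus\{a_i\}$. Before that time, the additive functional $A$ coincides with $L^{a_i}$, because the other local times have not yet started to grow.

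Excursions hitting $A_n\setminus\{a_i\}$ arrive as a Poisson point process in local time with rate $\bm{n}^{a_i}(T_{A_n\setminus\{a_i\}}<\infty)$. Here $T$ is computed on the excursion path, and "hitting before returning to $a_i$" is automatic since the excursion is killed at $a_i$. Hence the holding time of $Y$ at $a_i$ is exponential with this rate, and
\begin{align*}
-q_{i,i}=\bm{n}^{a_i}\bigl(T_{A_n\setminus\{a_i\}}<T_{a_i}\bigr)=\sum_{j\neq i}\bm{n}^{a_i}\bigl(T_{a_j}=T_{A_n\setminus\{a_i\}}<T_{a_i}\bigr).
\end{align*}
One point needs care: the holding time must be the local time elapsed before the jump, with no extra time-change. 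I would verify this using the fact that $A^{-1}$ skips the excursion intervals, and that each $L^{a_j}$ with $j\neq i$ is constant until $T_{A_n\setminus\{a_i\}}$.

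Next I would express $\bm{n}^{a_i}(T_{a_k}<T_{a_i})$ through the quantities $u_j:=\bm{n}^{a_i}(T_{a_j}=T_{A_n\setminus\{a_i\}}<T_{a_i})$. I would apply the strong Markov property of the excursion measure at the stopping time $T_{A_n\setminus\{a_i\}}$. On the event that the excursion first lands at $a_l$ with $l\neq i$, the event $\{T_{a_k}<T_{a_i}\}$ occurs with $\P_{a_l}$-probability $\P_{a_l}(T_{a_k}<T_{a_i})$. This probability equals $1$ when $l=k$. This gives the linear system
\begin{align*}
\bm{n}^{a_i}(T_{a_k}<T_{a_i})=\sum_{l\neq i}\P_{a_l}(T_{a_k}<T_{a_i})\,u_l,\qquad k\neq i.
\end{align*}
In matrix form this reads $\bm{v}=M\bm{u}$, where $M$ is the $(n-1)\times(n-1)$ matrix of (\ref{aa1}). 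Since $-q_{i,i}=\bm{1}^\top\bm{u}=\bm{1}^\top M^{-1}\bm{v}$, this is exactly (\ref{aa1}).

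The invertibility of $M$ is the same fact used in Theorem \ref{A1}, with $a_i$ in place of $a_n$. I would take it from that proof: a linear system of the same type holds for every starting point, and by the structure of (\ref{eq1}) the matrix $M$ is invertible.

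The main obstacle is justifying the strong Markov decomposition under the $\sigma$-finite measure $\bm{n}^{a_i}$. One needs the finiteness $\bm{n}^{a_i}(T_{a_k}<T_{a_i})<\infty$ for $a_k\neq a_i$, which holds because $a_k$ is regular and at positive distance from $a_i$. The Markov property of $\bm{n}^{a_i}$ at positive stopping times is standard; I would invoke it in the form recalled in Subsection \ref{Sub22}. One must also check that on $\{T_{a_k}<T_{a_i}\}$ the excursion hits $A_n\setminus\{a_i\}$ at some first point $a_l$ without having hit $a_i$, which is immediate.
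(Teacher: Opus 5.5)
Your route is the paper's own. You read (\ref{aa2}) off from (\ref{a2}). You identify the holding time at $a_i$ with $L^{a_i}_{T_{A_n\setminus\{a_i\}}}$ and get its rate from the excursion point process; the paper cites Getoor and Lemma~6.3 of Takeda--Yano for these two facts, while you argue them directly. You then use the strong Markov property of $\bm{n}^{a_i}$ at $T_{A_n\setminus\{a_i\}}$ to get $\bm{v}=M\bm{u}$, and invert $M$.

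For the inversion, cite Lemma~\ref{c1} with $a_i$ in the role of $a_n$. There $M$ is a positive diagonal matrix times the transposed Green matrix of the trace chain on $A_n\setminus\{a_i\}$ killed at $T_{a_i}$, which is invertible by Lemma~\ref{c0}. Your remark that ``a linear system of the same type holds for every starting point'' does not by itself force invertibility: at $x=a_j$ it only reproduces the $j$-th column of $M$. With that citation, your argument is complete in the recurrent case (a).

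In the transient case (b), however, the holding-time step fails, and the paper's proof of (\ref{D1}) has the same gap. Under $\P_{a_i}$ the local time $L^{a_i}_\infty$ is exponential with rate $\kappa=\bm{n}^{a_i}(T_{a_i}=\infty)>0$, so $A_\infty<\infty$ and $Y$ is a killed chain. The holding time $H=L^{a_i}_{T_{A_n\setminus\{a_i\}}}$ therefore also ends when the infinite excursion arrives first and avoids $A_n\setminus\{a_i\}$; after it there are no further excursions. Hence
\[
-q_{i,i}=\bm{n}^{a_i}\bigl(T_{A_n\setminus\{a_i\}}<\infty\bigr)+\bm{n}^{a_i}\bigl(T_{A_n\setminus\{a_i\}}=T_{a_i}=\infty\bigr),
\]
and the second term can be positive.

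Take Brownian motion with drift $\mu>0$ and $a_1=0<a_2=1$. Then $\kappa=\mu$, $h(1)=(1-e^{-2\mu})/\mu$ and $h(-1)=0$, so $-q_{2,2}=1/\P_1[L^1_{T_0}]=1/h^B(-1)=\mu/(1-e^{-2\mu})$. By contrast, Theorem~\ref{B2} gives $\bm{n}^{1}(T_0<T_1)=\mu/(e^{2\mu}-1)$, which is exactly what (\ref{aa1}) returns for $n=2$, $i=2$. The two values differ by exactly $\kappa$, so (\ref{aa1}) is not correct as stated in case (b).

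Your linear system $\bm{v}=M\bm{u}$ and the jump rates $q_{i,j}=u_j$ ($j\neq i$) do remain valid there. Combining them with (\ref{aa2}) gives the corrected formula $-q_{i,i}=\bm{1}^\top M^{-1}\bm{v}/\P_{a_i}(T_{A_n\setminus\{a_i\}}<\infty)$. This reduces to (\ref{aa1}) in the recurrent case, where $\P_{a_i}(T_{A_n\setminus\{a_i\}}<\infty)=1$.
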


The proof of this theorem will be given in Section \ref{S4}. In Remark \ref{GR}, we will discuss the relation of our formulas with the results of Getoor \cite{G}.

\begin{Rem}
By Remark \ref{A2} and Theorems \ref{B1} and \ref{B2}, all entries of $\bm{Q}$ can be represented solely by the renormalized zero resolvent $h$.
\end{Rem}

\subsection{Organization}
This paper is organized as follows. In Section \ref{S2}, we prepare some general results of \Levy\ processes related to the renormalized zero resolvent. In Section \ref{S3}, we discuss the hitting probabilities and prove Theorem \ref{A1}. In Section \ref{S4}, we discuss the $Q$-matrix of the trace process and prove Theorem \ref{A3}. We also discuss the relation of our formulas with the results of Getoor \cite{G}. Finally, in Section \ref{S5}, we discuss several examples.


\section{Preliminaries}
\label{S2}
\subsection{\Levy\ Process and Resolvent Density}
Let $((X_t)_{t\ge 0},(\P_x)_{x\in \R})$ be the canonical representation of a one-dimensional \Levy\ process, where the subscript $x\in \R$ of $\P_x$ stands for the starting point. For $t\ge 0$, we denote by $\F_t^X:=\sigma(X_s,\ 0\le s\le t)$ the natural filtration of $(X_t)_{t\ge 0}$, and write $\F_t:=\bigcap_{\e>0}\F_{t+\e}^X$. For a set $A\subset \R$, let $T_A$ be the first hitting time of $A$ for $(X_t)_{t\ge 0}$, i.e.,
\begin{align}
  T_{A}:=\inf \{t\ge 0;\ X_t\in A\}.
\end{align}
For simplicity, we denote $T_{\{a\}}$ as $T_a$ for $a\in \R.$ For $\lambda\in \R$, we denote by $\Psi(\lambda)$ the characteristic exponent of $(X_t)_{t\ge 0}$, i.e., $\Psi(\lambda)$ satisfies
\begin{align}
  \P_0\left[e^{i\lambda X_t}\right]=e^{-t\Psi(\lambda)}\qquad \text{for}\ t\ge 0,
\end{align}
where $\P_x[\cdot]$ denotes the expectation under $\P_x$.

We consider the following conditions:
\begin{enumerate}
  \item[] \textbf{(A)} For the characteristic exponent $\Psi$ of $(X_t)_{t\ge 0}$, it holds that
  \begin{align}
    \int_0^\infty \left|\frac{1}{q+\Psi(\lambda)}\right|d\lambda<\infty\qquad \text{for}\ q>0.
  \end{align}
  \item[] \textbf{(A1)} The process $(X_t)_{t\ge 0}$ is not a compound Poisson process.
  \item[] \textbf{(A2)} $0$ is regular for itself.
\end{enumerate}
Note that the condition \textbf{(A)} implies conditions \textbf{(A1)} and \textbf{(A2)}. See, e.g., page 7 of \cite{TY}. Under conditions \textbf{(A1)} and \textbf{(A2)}, it is known that $(X_t)_{t\ge 0}$ has a bounded continuous resolvent density $r_q(x)$. It satisfies
\begin{align}
  \int_{\R}f(y)r_q(y)dy=\P_0\left[\int_0^\infty e^{-qt}f(X_t)dt\right]\qquad \text{for}\ q>0\ \text{and}\ f\ge 0.
\end{align}
See, e.g., Theorems II.16 and II.19 of \cite{Ber}. Moreover, it is known that $r_q(0)$ is divergent as $q\to 0+$ if and only if the process $(X_t)_{t\ge 0}$ is recurrent. See, e.g. Corollary 15.1 of \cite{Tukada} and Theorem 37.5 of \cite{Sato}.

\subsection{Local Time and Excursion}
\label{Sub22}
Assume the conditions \textbf{(A1)} and \textbf{(A2)} are satisfied. Then, we can define a local time of $a\in \R$, which we denote by $(L_t^a)_{t\ge 0}$. We can choose $(L_t^a)_{t\ge 0}$ so that it satisfies
\begin{align}
\label{B3}
  \P_x\left[\int_0^\infty e^{-qt}dL_t^a\right]=r_q(a-x)\qquad \text{for}\ q>0\ \text{and}\ x\in \R .
\end{align}
See, e.g., Section V of \cite{Ber}. Let $(\eta_u^a)_{u\ge 0}$ denote the right-continuous inverse of $(L_t^a)_{t\ge 0}$, that is,
\begin{align}
  \eta_u^a:=\inf\{t>0;\ L_t^a>u\}.
\end{align}
Then, the process $((\eta_u^a),\P_a)$ is a possibly killed subordinator. See, e.g., Proposition V.4 of \cite{Ber}.

Next, let $\bm{n}^{a}$ denote the characteristic measure of excursions away from $a\in \R$. We can choose $\bm{n}^a$ so that the subordinator $(\eta_u^a)_{u\ge 0}$ has no drift and its \Levy\ measure is $\bm{n}^a(T_a\in dt)$, that is,
\begin{align}
\label{exc1}
  \P_a[e^{-q\eta_l^a}]=\exp \Big(-l\bm{n}^a[1-e^{-qT_a}]\Big)\qquad \text{for}\ q>0,\ l\ge 0.
\end{align}

For more details on excursion theory, see, e.g., Section IV of \cite{Ber} or Section 6 of \cite{Kyp}.

\subsection{Renormalized Zero Resolvent}
\label{S23}
\subsubsection{Recurrent Case.}
Assume that the condition \textbf{(A)} is satisfied and that the process $(X_t)_{t\ge 0}$ is recurrent. Note that in this case $r_q(0)$ diverges as $q\to 0+$. For any $x\in \R$, the limit
\begin{align}
\label{h}
  h(x):=\lim_{q\to 0+}\Big(r_q(0)-r_q(-x)\Big)
\end{align}
exists and is finite. See Theorem 1.1 of Takeda-Yano  \cite{TY}. We call the limit function $h(x)$ the \emph{renormalized zero resolvent}. It is known that $h(x)$ is non-negative, continuous, and a subadditive function. See Theorem 1.1 of Takeda-Yano \cite{TY}. 

This function has been explicitly determined for several specific processes.

\begin{Exa}[Brownian motion]
\label{Exa1}
  Assume $(X_t)_{t\ge 0}$ is a Brownian motion. Then, we have
  \begin{align}
    h(x)=|x|.
  \end{align}
  See, e.g. Example 5.1 of \cite{Pa}.
\end{Exa}

\begin{Exa}[Strictly $\alpha$-stable process]
\label{Exa2}
Assume that $(X_t)_{t\ge 0}$ is a strictly $\alpha$-stable process of index $\alpha\in (1,2)$ with the \Levy\ measure
\begin{align}
  \nu(dx)=\begin{cases}
    c_+|x|^{-\alpha-1}dx &\text{on}\ (0,\infty),\\
    c_-|x|^{-\alpha-1}dx &\text{on}\ (-\infty,0),
  \end{cases}
\end{align}
where $c_+,c_-\ge 0$ and $c_++c_->0.$ Then, we have
\begin{align}
  h(x)=\frac{1}{K(\alpha)}(1-\beta\ \mathrm{sgn}(x))|x|^{\alpha-1},
\end{align}
where
\begin{align}
  \beta:=\frac{c_+-c_-}{c_++c_-},\qquad K(\alpha):=-\frac{(c_++c_-)\pi}{\alpha \tan (\frac{\pi\alpha}{2})}\left(1+\beta^2 \tan^2\left(\frac{\pi\alpha}{2}\right)\right).
\end{align}
See, Section 5 of Yano \cite{Yano2}.
\end{Exa}

\begin{Exa}[Spectrally negative \Levy\ process]
\label{Exa3}
Assume $(X_t)_{t\ge 0}$ is a spectrally negative \Levy\ process. Then, we have
\begin{align}
  h(x)=\begin{cases}
    W(x)+\frac{1-e^{\Phi(0)x}}{\Psi'(\Phi(0)+)}&\text{if $(X_t)_{t\ge 0}$ drifts to $-\infty$},\\
    W(x)-\frac{x}{\P_0[X_1^2]}&\text{if $(X_t)_{t\ge 0}$ oscillates},\\
    W(x)&\text{if $(X_t)_{t\ge 0}$ drifts to $\infty$},
  \end{cases}
\end{align}
where $W(x)$ denotes the $0$-scale function of $(X_t)_{t\ge 0}$ and $\Phi(0)$ denotes the largest root of the equation $\Psi(\lambda)=0$. See, Example 5.2 of Pant\'{i}	 \cite{Pa}.
\end{Exa}

Under additional assumptions, the renormalized zero resolvent has an integral representation as follows:

\begin{thm}[Theorem 15.1 of Tsukada \cite{Tukada}]
If the additional assumption 
\begin{align}
  \int_0^1 \left|\mathrm{Im}\left(\frac{\lambda}{\Psi(\lambda)}\right)\right|d\lambda<\infty
\end{align}
is satisfied, then it holds that
\begin{align}
  h(x)=\frac{1}{\pi}\int_0^\infty \mathrm{Re}\left(\frac{1-e^{i\lambda x}}{\Psi(\lambda)}\right)d\lambda.
\end{align}
In particular, if $\P_0[X_1^2]<\infty$ then the assumption of this theorem holds. See Lemma 3.2 of Takeda-Yano \cite{TY}.
\end{thm}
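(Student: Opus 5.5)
The plan is to start from the Fourier representation of the resolvent density, write $r_q(0)-r_q(-x)$ as an integral over $\lambda$, and then let $q\to 0+$ inside the integral by dominated convergence. The additional assumption is exactly what controls the integrand near $\lambda=0$.

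\emph{Step 1: Fourier inversion.} Under \textbf{(A)}, the function $\lambda\mapsto 1/(q+\Psi(\lambda))$ is integrable, since $\Psi(-\lambda)=\overline{\Psi(\lambda)}$ gives integrability on all of $\R$. It is also the Fourier transform of $r_q$:
\begin{align}
  \int_\R e^{i\lambda y}r_q(y)\,dy=\int_0^\infty e^{-qt}e^{-t\Psi(\lambda)}dt=\frac{1}{q+\Psi(\lambda)}.
\end{align}
Since $r_q$ is continuous and integrable, the inversion formula holds pointwise:
\begin{align}
  r_q(y)=\frac{1}{2\pi}\int_\R \frac{e^{-i\lambda y}}{q+\Psi(\lambda)}\,d\lambda .
\end{align}
Hence
\begin{align}
  r_q(0)-r_q(-x)=\frac{1}{2\pi}\int_\R \frac{1-e^{i\lambda x}}{q+\Psi(\lambda)}\,d\lambda=\frac{1}{\pi}\int_0^\infty \mathrm{Re}\left(\frac{1-e^{i\lambda x}}{q+\Psi(\lambda)}\right)d\lambda .
\end{align}
The last equality pairs $\lambda$ with $-\lambda$ and uses $\Psi(-\lambda)=\overline{\Psi(\lambda)}$. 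By (\ref{h}), it remains to pass to the limit $q\to0+$ under the integral, where the integrand converges pointwise for $\lambda>0$ provided $\Psi(\lambda)\neq0$.

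\emph{Step 2: a uniform domination.} Since $\mathrm{Re}\,\Psi\ge0$, we have $|q+\Psi|^2=(q+\mathrm{Re}\,\Psi)^2+(\mathrm{Im}\,\Psi)^2\ge|\Psi|^2$. Computing $\mathrm{Re}(a/b)=\mathrm{Re}(a\bar b)/|b|^2$ gives
\begin{align}
  \mathrm{Re}\left(\frac{1-e^{i\lambda x}}{q+\Psi}\right)=\frac{(1-\cos\lambda x)(q+\mathrm{Re}\,\Psi)-\sin(\lambda x)\,\mathrm{Im}\,\Psi}{|q+\Psi|^2}.
\end{align}
The two terms are bounded separately.

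The first term is at most $(1-\cos\lambda x)/(q+\mathrm{Re}\,\Psi)\le(1-\cos\lambda x)/\mathrm{Re}\,\Psi(\lambda)$.

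The second term is at most $|x|\,|\lambda\,\mathrm{Im}\,\Psi|/|\Psi|^2=|x|\,|\mathrm{Im}(\lambda/\Psi(\lambda))|$. This is integrable on $(0,1)$ precisely by the additional assumption.

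For the first term near $0$, I would use the lower bound
\begin{align}
  \mathrm{Re}\,\Psi(\lambda)=\frac{\sigma^2\lambda^2}{2}+\int(1-\cos\lambda y)\,\nu(dy)\ge c\lambda^2 \qquad \text{for small } \lambda.
\end{align}
This holds because $1-\cos u\ge c'u^2$ for $|u|\le1$, and $\sigma>0$ or $\nu\neq0$ by \textbf{(A1)}. So the first term is bounded by $x^2/(2c)$ on $(0,1)$.

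For $\lambda\ge1$, both terms are bounded by $2/|\Psi(\lambda)|$.

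\emph{Step 3: the large-$\lambda$ regime (main obstacle).} I expect the main difficulty to be showing that $1/|\Psi|$ is integrable on $[1,\infty)$. The plan is to show $\inf_{\lambda\ge1}|\Psi(\lambda)|>0$; then $|1+\Psi|\le C|\Psi|$ there, so $1/|\Psi|\le C/|1+\Psi|$, which is integrable by \textbf{(A)} with $q=1$. For the infimum:
\begin{enumerate}
  \item[] \emph{Zeros.} $\Psi(\lambda)=0$ for some $\lambda\neq0$ would force $X$ to live on a lattice. This is excluded because the bounded continuous resolvent density exists under \textbf{(A1)} and \textbf{(A2)}.
  \item[] \emph{Behaviour at infinity.} Integrability of $1/|1+\Psi|$ is incompatible with $\Psi$ being small on a large set. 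I would make this precise using $\liminf_{\lambda\to\infty}\mathrm{Re}\,\Psi(\lambda)/\log\lambda$ estimates or a Riemann–Lebesgue-type argument showing $|\Psi(\lambda)|\to\infty$ along with continuity. If that fails, a fallback is to split $\int_1^\infty$ and dominate by $1/|1+\Psi|$ directly wherever $|\Psi|\ge1$.
\end{enumerate}

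With the domination established, dominated convergence yields the formula. The remark about $\P_0[X_1^2]<\infty$ follows because $\Psi$ is then $C^2$ with $\mathrm{Im}\,\Psi(\lambda)=O(\lambda^2)$ if the mean is zero, or $\Psi(\lambda)\sim-i\mu\lambda$ otherwise. In both cases $\mathrm{Im}(\lambda/\Psi)$ stays bounded near $0$.
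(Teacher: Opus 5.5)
Your dominated-convergence argument for the integral formula is sound. However, your justification of the final clause (the case $\P_0[X_1^2]<\infty$) fails in exactly the case that matters. This theorem sits in the recurrent case, and a \Levy\ process with finite nonzero mean drifts to $\pm\infty$ by the strong law of large numbers. So $\P_0[X_1^2]<\infty$ forces $\P_0[X_1]=0$, and your nonzero-mean case is vacuous. In the mean-zero case, $|\Psi(\lambda)|\asymp\lambda^2$ together with $\mathrm{Im}\,\Psi(\lambda)=O(\lambda^2)$ (indeed $o(\lambda^2)$) only yields
\[
\left|\mathrm{Im}\left(\frac{\lambda}{\Psi(\lambda)}\right)\right|=\frac{\lambda\,|\mathrm{Im}\,\Psi(\lambda)|}{|\Psi(\lambda)|^2}=O(1/\lambda),
\]
and this bound is neither bounded nor integrable near $0$. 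Boundedness would require $\mathrm{Im}\,\Psi=O(\lambda^3)$, essentially a third moment, and it genuinely fails. For example, take a Brownian component plus $\nu(dy)=y^{-3}(\log y)^{-2}1_{(e,\infty)}(y)\,dy$, compensated to mean zero; then $|\mathrm{Im}(\lambda/\Psi(\lambda))|\asymp 1/(\lambda\log^2(1/\lambda))$ as $\lambda\to0+$, which is unbounded but integrable.

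The $C^2$ expansion is too coarse here; you need to work with the \Levy\ measure directly. In the mean-zero case $\mathrm{Im}\,\Psi(\lambda)=\int(\lambda y-\sin\lambda y)\,\nu(dy)$. Using $|u-\sin u|\le\min(|u|^3/6,\,2|u|)$ and Fubini,
\[
\int_0^1\frac{|\mathrm{Im}\,\Psi(\lambda)|}{\lambda^3}\,d\lambda\le\int\nu(dy)\int_0^1\min\left(\frac{|y|^3}{6},\,\frac{2|y|}{\lambda^2}\right)d\lambda\le C\int y^2\,\nu(dy)<\infty,
\]
where you split the inner integral at $\lambda=1/|y|$ when $|y|>1$. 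Combine this with $|\Psi(\lambda)|\ge\mathrm{Re}\,\Psi(\lambda)\ge c\lambda^2$ near $0$, and with continuity of $\lambda/\Psi(\lambda)$ away from $0$. This gives the assumption.

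The rest of your proposal is the natural route; the paper itself gives no proof and only cites Tsukada and Takeda--Yano. The step you flagged as the main obstacle is immediate via the Riemann--Lebesgue idea you mention. Since $r_q\ge0$ has total mass $1/q$, its Fourier transform $1/(q+\Psi)$ vanishes at infinity, so $|\Psi(\lambda)|\to\infty$. Together with the absence of zeros and continuity, this gives $\inf_{\lambda\ge\delta}|\Psi(\lambda)|>0$ for each $\delta>0$. Your fallback of dominating only where $|\Psi|\ge1$ would not suffice on its own.

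Use the same bound on $[\delta,1]$ as well, since your argument for $\mathrm{Re}\,\Psi\ge c\lambda^2$ only covers small $\lambda$; there both terms are at most $2/|\Psi|$. Finally, \textbf{(A1)} does not exclude a pure drift; it is \textbf{(A)} that forces $\sigma>0$ or $\nu\neq0$.
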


The expectation of the local time evaluated at the hitting time can be expressed in terms of the renormalized zero resolvent as follows:

\begin{thm}[Lemma 3.5 and Theorem 3.8 of Takeda-Yano \cite{TY}]
\label{B1}
We obtain the following formula:
\begin{align}
  h^B(a)&:=\P_0[L_{T_a}^0]=\frac{1}{\bm{n}^0(T_a<\infty)}=h(a)+h(-a).
\end{align}
\end{thm}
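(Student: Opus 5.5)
The plan is to prove the two equalities separately. The first, $\P_0[L^0_{T_a}]=1/\bm{n}^0(T_a<\infty)$, comes from excursion theory. The second, identifying this quantity with $h(a)+h(-a)$, comes from a resolvent computation via the strong Markov property at $T_a$, followed by the limit $q\to0+$. Throughout I work in the recurrent setting of Subsection \ref{S23}, where $h$ is given by (\ref{h}), and I take $a\neq 0$.

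\emph{First equality.} Under $\P_0$ the excursions of $X$ away from $0$, indexed by local time $L^0$, form a Poisson point process with characteristic measure $\bm{n}^0$; in the recurrent case this process is not killed. The event that an excursion hits $a$ is $\{T_a<\infty\}$ for that excursion. Hence the local time $L^0_{T_a}$ at which the first such excursion occurs is exponentially distributed with parameter $\bm{n}^0(T_a<\infty)$. We need $0<\bm{n}^0(T_a<\infty)<\infty$. Positivity holds because $a$ is hit $\P_0$-a.s. by recurrence and point-regularity (\textbf{(A2)}), and $T_a>T_0$ fails for the degenerate path. Finiteness holds because excursions reaching distance $|a|$ cannot accumulate. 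Taking expectations gives $\P_0[L^0_{T_a}]=1/\bm{n}^0(T_a<\infty)$.

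\emph{Second equality.} For $q>0$, I apply the strong Markov property at $T_a$. Points are hit exactly, so $X_{T_a}=a$. Using (\ref{B3}) this gives
\begin{align*}
r_q(0)=\P_0\Big[\int_0^{T_a}e^{-qt}dL^0_t\Big]+\P_0[e^{-qT_a}]\,r_q(-a).
\end{align*}
The same decomposition applied to $dL^a$ gives $r_q(a)=\P_0[e^{-qT_a}]\,r_q(0)$, since $L^a$ does not increase before $T_a$. Set $h_q(x):=r_q(0)-r_q(-x)$. Then
\begin{align*}
\P_0\Big[\int_0^{T_a}e^{-qt}dL^0_t\Big]=\frac{r_q(0)^2-r_q(a)r_q(-a)}{r_q(0)}=h_q(a)+h_q(-a)-\frac{h_q(a)h_q(-a)}{r_q(0)}.
\end{align*}
Now let $q\to0+$. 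On the left, monotone convergence gives $\P_0[L^0_{T_a}]$. On the right, $h_q(\pm a)\to h(\pm a)$ by (\ref{h}), and these limits are finite by Theorem 1.1 of \cite{TY}. Also $r_q(0)\to\infty$ by recurrence, so the last term vanishes. This yields $h(a)+h(-a)$.

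\emph{Main obstacle.} I expect the delicate point to be the rigorous justification of the excursion step: identifying $L^0_{T_a}$ as the first arrival of an exponential clock with rate $\bm{n}^0(T_a<\infty)$, under the normalization (\ref{exc1}) that is consistent with (\ref{B3}), and checking that this rate is finite and positive. I would handle this with the standard compensation formula for the excursion point process. The resolvent computation itself is routine once $X_{T_a}=a$ and the two strong Markov identities are in place.
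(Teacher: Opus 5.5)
The paper does not prove this statement itself; it quotes it from Takeda--Yano. Your argument is correct and is the standard route to these identities: you get the exponential law of $L^0_{T_a}$ with rate $\bm{n}^0(T_a<\infty)$ from the excursion point process, then decompose $r_q(0)$ at $T_a$ by the strong Markov property, use $\P_0[e^{-qT_a}]=r_q(a)/r_q(0)$, and let $q\to 0+$.

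The only loose spot is your justification that $\bm{n}^0(T_a<\infty)>0$, which is garbled as written. It follows directly from your own identity: $\P_0[e^{-qT_a}]=1-h_q(-a)/r_q(0)\to 1$, so $\P_0(T_a<\infty)=1$.
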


\subsubsection{Transient Case}
Assume that the conditions \textbf{(A1)} and \textbf{(A2)} are satisfied and that the process $(X_t)_{t\ge 0}$ is transient. In this case, it is known that
\begin{align}
\label{k}
  \kappa:=\lim_{q\to 0+}\frac{1}{r_q(0)}=\bm{n}^0(T_0=\infty)>0.
\end{align}
See, e.g. the equation (2.5) of \cite{TY}. For any $x\in \R$, the limit
\begin{align}
\label{h2}
  h(x):=\lim_{q\to 0+}\Big(r_q(0)-r_q(-x)\Big)=\frac{1}{\kappa}\P_x(T_0=\infty)
\end{align}
exists and is finite. See Theorem 9.1 of Takeda-Yano \cite{TY}. We call the limiting function $h(x)$ the \emph{renormalized zero resolvent}.

The expectation of the local time evaluated at the hitting time can be expressed in terms of the renormalized zero resolvent as follows:

\begin{thm}[Lemma 9.3 and Theorem 9.5 of Takeda-Yano \cite{TY}]
\label{B2}
We obtain the following formulas:
\begin{align}
  h^B(a)&:=\P_0[L_{T_a}^0]=h(a)+h(-a)-\kappa h(a)h(-a),\\
  \bm{n}^0(T_a<T_0)&=\frac{1-\kappa h(-a)}{h^B(a)}.
\end{align}
\end{thm}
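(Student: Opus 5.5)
The statement immediately above is Theorem \ref{B2}, which is quoted from Takeda--Yano \cite{TY}. Still, here is how I would prove it from the definitions in Subsection \ref{S23}.

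\emph{Step 1: local time at $T_a$ is exponential.} By excursion theory under $\P_0$, the local time $L^0_{T_a}$ is exponentially distributed. Its parameter is the $\bm{n}^0$-rate of excursions that either hit $a$ or are infinite, since the latter kill $L^0$. An infinite excursion has $T_0=\infty$, so if it hits $a$ it does so before returning to $0$. Hence the rate is $\bm{n}^0(T_a<T_0)+\bm{n}^0(T_0=\infty,\ T_a=\infty)$, and $h^B(a)$ is the reciprocal of this.

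\emph{Step 2: Laplace-transform identity.} To get a computable identity, I would instead use the strong Markov property at $T_a$ in (\ref{B3}):
\begin{align*}
r_q(0)=\P_0\Big[\int_0^{T_a}e^{-qt}dL^0_t\Big]+\P_0[e^{-qT_a}]\,r_q(-a),
\end{align*}
together with the standard identity $\P_x[e^{-qT_0}]=r_q(-x)/r_q(0)$. Writing $u_q=r_q(0)$ and noting $\P_0[e^{-qT_a}]=r_q(a)/u_q$, this gives
\begin{align*}
\P_0\Big[\int_0^{T_a}e^{-qt}dL^0_t\Big]=\frac{u_q^2-r_q(a)r_q(-a)}{u_q}.
\end{align*}

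\emph{Step 3: the limit $q\to0+$.} Write $r_q(\pm a)=u_q-h_q(\mp a)$ with $h_q(x)=r_q(0)-r_q(-x)$. Then
\begin{align*}
\frac{u_q^2-(u_q-h_q(a))(u_q-h_q(-a))}{u_q}=h_q(a)+h_q(-a)-\frac{h_q(a)h_q(-a)}{u_q}.
\end{align*}
By (\ref{h2}) and (\ref{k}), the right-hand side tends to $h(a)+h(-a)-\kappa h(a)h(-a)$. Monotone convergence on the left-hand side gives $\P_0[L^0_{T_a}]$, which proves the first formula.

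\emph{Step 4: the excursion probability.} By Step 1,
\begin{align*}
1/h^B(a)=\bm{n}^0(T_a<T_0)+\bm{n}^0(T_0=\infty,T_a=\infty).
\end{align*}
The second term is $\kappa$ times the probability that an infinite excursion never hits $a$. I would identify it as $\kappa(1-\kappa h(-a))$-type using $\P_0(T_a=\infty)=\kappa h(-a)$, which follows from (\ref{h2}) by translation.

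I expect this identification to be the main obstacle. A cleaner route avoids it. Under $\P_0$, the event $\{T_a=\infty\}$ occurs exactly when the first excursion that is either infinite or hits $a$ is infinite and does not hit $a$. Hence
\begin{align*}
\kappa h(-a)=\P_0(T_a=\infty)=h^B(a)\,\bm{n}^0(T_0=\infty,T_a=\infty).
\end{align*}
Solving for $\bm{n}^0(T_0=\infty,T_a=\infty)$ and substituting into the expression for $1/h^B(a)$ gives
\begin{align*}
\bm{n}^0(T_a<T_0)=\frac{1-\kappa h(-a)}{h^B(a)}.
\end{align*}
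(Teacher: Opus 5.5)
The paper gives no proof of this statement; it is quoted from Takeda--Yano. Your proposal therefore has to stand on its own, and it does.

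Steps 2--3 are correct. The strong Markov property at $T_a$ in (\ref{B3}), together with $\P_0[e^{-qT_a}]=r_q(a)/r_q(0)$, gives
\begin{align*}
\P_0\Big[\int_0^{T_a}e^{-qt}dL^0_t\Big]=h_q(a)+h_q(-a)-\frac{h_q(a)h_q(-a)}{r_q(0)}.
\end{align*}
Then (\ref{k}), (\ref{h2}) and monotone convergence yield the first formula. The same computation with $1/r_q(0)\to 0$ gives the recurrent formula of Theorem \ref{B1}.

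Step 1 is also right, and it is the mechanism the paper itself invokes in Section \ref{S4} via Lemma 6.3 of \cite{TY}. Under the normalization (\ref{exc1}), for which $\bm{n}^0(T_0=\infty)=\kappa$, the excursions form a Poisson point process stopped at the first infinite excursion. So $L^0_{T_a}$ is the first arrival in $\Gamma=\{T_a<T_0\}\cup\{T_0=\infty\}$, and it is exponential with rate $\bm{n}^0(\Gamma)=1/h^B(a)$. You should add a line checking $0<\bm{n}^0(\Gamma)<\infty$:
\begin{enumerate}[(i)]
\item the lower bound holds because $\bm{n}^0(\Gamma)\ge\kappa>0$;
\item the upper bound holds because $L^0_{T_a}>0$ $\P_0$-a.s., since $T_a>0$ and $0$ is regular for itself.
\end{enumerate}

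In Step 4, delete the tentative identification of $\bm{n}^0(T_0=\infty,T_a=\infty)$ as something ``of the form $\kappa(1-\kappa h(-a))$''. It is wrong in general: the value your own argument produces is $\kappa h(-a)/h^B(a)$. It also implicitly treats the infinite excursion as if it had the law of $X$ under $\P_0$, which it does not.

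Your ``cleaner route'' via competing Poisson arrivals is correct and is all you need. You can shorten it further by using the other piece of $\Gamma$. $\P_0(T_a<\infty)$ is exactly the probability that the first $\Gamma$-excursion lies in $\{T_a<T_0\}$, so
\begin{align*}
1-\kappa h(-a)=\P_0(T_a<\infty)=h^B(a)\,\bm{n}^0(T_a<T_0).
\end{align*}
This gives the second formula in one line, without ever isolating $\bm{n}^0(T_0=\infty,T_a=\infty)$.
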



\section{Hitting Probabilities of Finite Points}
\label{S3}
In this section, we prove Theorem \ref{A1}.

Before proving Theorem \ref{A1}, we present two lemmas in later use. The following lemma is a well-known fact in the theory of operator semigroups, but we provide a proof for the sake of completeness.

\begin{lem}
\label{c0}
Finite-state continuous-time Markov chains with right-continuous sample paths and almost surely finite lifetime have an invertible Green matrix.
\end{lem}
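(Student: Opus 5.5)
We will work with a finite-state continuous-time Markov chain $(Y_u)_{u\ge 0}$ on $\{1,\dots,n\}$ with $Q$-matrix $\bm{Q}=(q_{i,j})$, possibly sub-Markovian through killing. Its Green matrix is
\begin{align*}
G=(g_{i,j}),\qquad g_{i,j}:=\mathbb{E}_i\Big[\int_0^\zeta 1_{\{Y_u=j\}}\,du\Big],
\end{align*}
where $\zeta$ is the lifetime. The plan is to show that $G$ is finite and equals $(-\bm{Q})^{-1}$, or equivalently $\int_0^\infty e^{u\bm{Q}}du$. Right-continuity of the sample paths on a finite state space guarantees that the transition semigroup is standard, so $P_u=e^{u\bm{Q}}$ with a conservative-or-killed $Q$-matrix: $q_{i,j}\ge 0$ for $i\ne j$ and $\sum_j q_{i,j}\le 0$.

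\textbf{Step 1: finiteness of $G$.} Since $\zeta<\infty$ almost surely from every state, $P_u\bm{1}=\P_\cdot(\zeta>u)\to 0$ as $u\to\infty$ entrywise. The state space is finite, so there is $u_0$ with $\max_i \P_i(\zeta>u_0)\le 1/2$. The Markov property then gives $\P_i(\zeta>ku_0)\le 2^{-k}$, so $\mathbb{E}_i[\zeta]<\infty$. Because $\sum_j g_{i,j}=\mathbb{E}_i[\zeta]$, every entry of $G$ is finite.

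\textbf{Step 2: $G=(-\bm{Q})^{-1}$.} Fubini gives $G=\int_0^\infty P_u\,du$, and this integral converges entrywise, with $\|P_u\|\le 2^{-\lfloor u/u_0\rfloor}$ in the $\ell^\infty$ operator norm. Integrating Kolmogorov's backward equation $\frac{d}{du}P_u=\bm{Q}P_u$ over $[0,t]$ yields
\begin{align*}
\bm{Q}\int_0^t P_u\,du = P_t - I.
\end{align*}
Letting $t\to\infty$ and using $P_t\to 0$ gives $\bm{Q}G=-I$. The same argument with the forward equation gives $G\bm{Q}=-I$, although for square matrices one identity already suffices. Hence $G$ is invertible with $G^{-1}=-\bm{Q}$.

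The main point needing care is that $P_u$ is differentiable with generator $\bm{Q}$. This is where right-continuity enters: it makes $u\mapsto P_u$ continuous at $0$, and a continuous matrix semigroup is of the form $e^{u\bm{Q}}$. This step also carries the analytic content, namely that almost surely finite lifetime forces $P_t\to 0$ rather than merely $P_t\bm{1}\le\bm{1}$.

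As an alternative avoiding generators, I would use a first-step decomposition. Let $\Pi$ be the jump (substochastic) matrix and $D=\mathrm{diag}(1/|q_{i,i}|)$. Then $G=D+\Pi G$, so $G=(I-\Pi)^{-1}D$. Here $I-\Pi$ is invertible because $\Pi^k\to 0$, which follows from absorption, i.e. killing, being reached almost surely.
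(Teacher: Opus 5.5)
Your argument is correct and is essentially the paper's proof: integrate the backward equation $\frac{d}{du}P_u=\bm{Q}P_u$, use that the almost surely finite lifetime forces $P_t\to 0$, and conclude $\bm{Q}G=-\bm{I}$, so $G^{-1}=-\bm{Q}$. Your Step 1 adds something the paper leaves implicit: the geometric bound $\P_i(\zeta>ku_0)\le 2^{-k}$ gives finiteness of $G$ and exponential decay of $\|P_u\|$, which justifies moving $\bm{Q}$ inside $\int_0^\infty P_u\,du$ and passing to the limit.
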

\begin{proof}
Let $E$ be the state space, let $Y$ be the Markov chain, and let $P(t)=(P_t(x,y))_{x,y\in E}$ be the transition function. Then, the $Q$-matrix $Q$ satisfies the following well-known formula:
\begin{align}
  \frac{d}{dt}P(t)=P(t)Q=QP(t).
\end{align}
If we denote $\zeta$ as the lifetime, then we have
\begin{align}
  \lim_{t\to \infty}P_t(x,y)= \lim_{t\to \infty}\P_x(Y_t=y)\le \lim_{t\to \infty}\P_x(t<\zeta)=0.
\end{align}
Let $G=(G_{x,y})_{x,y\in E}$ denote the Green matrix. Then, we have
\begin{align}
  G_{x,y}=\P_{x}\left[\int_0^\infty 1_{\{Y_t=y\}}dt\right]=\int_0^\infty \P_{x}(Y_t=y)dt=\int_0^\infty P_t(x,y)dt.
\end{align}
Thus, we have
\begin{align}
  QG=\int_0^\infty QP(t)dt=\int_0^\infty \frac{d}{dt}P(t)dt=\lim_{t\to \infty}P(t)-P(0)=-\bm{I},
\end{align}
where $\bm{I}$ denotes the identity matrix. Therefore, Green matrix $G$ is invertible and has an inverse matrix $-Q$.
\end{proof}

Next, we show that the matrix in the equation (\ref{eq1})  is invertible.

\begin{lem}
\label{c1}
The matrix 
\begin{align}
\label{C1}
  \begin{pmatrix}
    1 & \P_{a_2}(T_{a_1}<T_{a_n}) & \cdots & \P_{a_{n-1}}(T_{a_1}<T_{a_n})\\
    \P_{a_1}(T_{a_2}<T_{a_n}) & 1  & \cdots &\P_{a_{n-1}}(T_{a_2}<T_{a_n})\\
    \vdots &\vdots &&\vdots \\
    \P_{a_1}(T_{a_{n-1}}<T_{a_n}) & \P_{a_2}(T_{a_{n-1}}<T_{a_n}) &\cdots & 1 \\
  \end{pmatrix}
\end{align}
is invertible.
\end{lem}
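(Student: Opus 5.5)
The plan is to identify the matrix in (\ref{C1}) as a Green matrix of a finite-state Markov chain, up to multiplication by an invertible diagonal matrix, and then invoke Lemma \ref{c0}. Consider the local time $L^{a_k}$ of each point $a_k$, $k=1,\dots,n-1$, and the Green-type quantities
\begin{align*}
  G_{l,k}:=\P_{a_l}\left[L^{a_k}_{T_{a_n}}\right],\qquad k,l=1,\dots,n-1 .
\end{align*}
By the strong Markov property at $T_{a_k}$, and since $L^{a_k}$ does not increase before $T_{a_k}$,
\begin{align*}
  G_{l,k}=\P_{a_l}(T_{a_k}<T_{a_n})\,\P_{a_k}\left[L^{a_k}_{T_{a_n}}\right]=\P_{a_l}(T_{a_k}<T_{a_n})\,G_{k,k}.
\end{align*}
Moreover, $G_{k,k}=\P_0[L^0_{T_{a_n-a_k}}]=h^B(a_n-a_k)$ by spatial homogeneity. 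This quantity is finite and strictly positive by Theorems \ref{B1} and \ref{B2}; equivalently, $1/G_{k,k}=\bm{n}^{a_k}(T_{a_n}<T_{a_k})$ in the recurrent case, with the killing mass added in the transient case. Hence the matrix (\ref{C1}), whose $(k,l)$ entry is $\P_{a_l}(T_{a_k}<T_{a_n})$, equals $(G^\top) D^{-1}$ with $D=\mathrm{diag}(G_{1,1},\dots,G_{n-1,n-1})$. It therefore suffices to show that $G=(G_{l,k})$ is invertible.

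To show this, take the trace process $Y$ of $X$ on $A_{n-1}=\{a_1,\dots,a_{n-1}\}$, time-changed by $A_t=\sum_{k=1}^{n-1}L^{a_k}_t$, and kill it at the moment $A_{T_{a_n}}$, that is, when $X$ hits $a_n$. (In the transient case, killing also occurs when $X$ leaves forever, i.e.\ $A_\infty$ is reached.) This killed process is a finite-state continuous-time Markov chain with right-continuous paths, because $X$ is right-continuous and strong Markov, and killing at the time change of the terminal time $T_{a_n}$ preserves the Markov property. By the change of variables $u=A_t$, its Green matrix is
\begin{align*}
  \P_{a_l}\left[\int_0^\zeta 1_{\{Y_u=a_k\}}du\right]=\P_{a_l}\left[\int_0^{T_{a_n}}1_{\{X_t=a_k\}}dA_t\right]=\P_{a_l}\left[L^{a_k}_{T_{a_n}}\right]=G_{l,k},
\end{align*}
where we use that $dL^{a_k}$ is carried by $\{t:X_t=a_k\}$.

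It remains to check that the lifetime $\zeta$ is a.s.\ finite; Lemma \ref{c0} then gives invertibility of $G$, and hence of (\ref{C1}). The total local time accumulated before $T_{a_n}$ satisfies $\P_{a_l}[\zeta]=\sum_k G_{l,k}\le\sum_k h^B(a_n-a_k)<\infty$, so $\zeta<\infty$ a.s.

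I expect the main obstacle to be the rigorous justification that the killed trace process is a genuine continuous-time Markov chain to which Lemma \ref{c0} applies. This needs the right-continuity of paths and the existence of the $Q$-matrix, and the paper asserts these for the trace process in the introduction; I would cite that and note that killing at a terminal time keeps these properties. A fallback that avoids Markov chain machinery is to argue directly that $G$ is a positive definite kernel, via the symmetry of the dual process, but that route seems harder for non-symmetric $X$. So I would stick with Lemma \ref{c0}.
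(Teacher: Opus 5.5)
Your proposal is correct and follows the paper's proof essentially verbatim. You factor (\ref{C1}) as a diagonal matrix times the transpose of $G_{l,k}=\P_{a_l}[L^{a_k}_{T_{a_n}}]$, identify $G$ as the Green matrix of the trace chain on $\{a_1,\dots,a_{n-1}\}$ killed at $T_{a_n}$, and apply Lemma \ref{c0}; your check that $\P_{a_l}[\zeta]<\infty$ supplies the finite-lifetime hypothesis that the paper leaves implicit. One small slip: the $(k,l)$ entry is $G_{l,k}/G_{k,k}$, so the factorization is $D^{-1}G^\top$ (matching the paper's $\bm{D}_n\bm{G}_n$), not $G^\top D^{-1}$, though this does not affect invertibility.
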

\begin{proof}
Assume that the condition \textbf{(A)} is satisfied and that the process $(X_t)_{t\ge 0}$ is recurrent. By Theorem \ref{B1}, we have
\begin{align*}
G_{i,k}:=\P_{a_i}\left[L_{T_{a_n}}^{a_k}\right]=\P_{a_i}(T_{a_k}<T_{a_n})\P_{a_k}\left[L_{T_{a_n}}^{a_k}\right]=\P_{a_i}(T_{a_k}<T_{a_n})h^B(a_k-a_n).
  \stepcounter{equation}\tag{\theequation}
\end{align*}
Thus, we have
\begin{align}
  (\ref{C1})={\tiny\begin{pmatrix}
    \frac{1}{h^B(a_1-a_n)} &0 &\cdots &0\\
    0&\frac{1}{h^B(a_2-a_n)}&\cdots &0\\
    \vdots&\vdots&&\vdots\\
    0&0&\cdots&\frac{1}{h^B(a_{n-1}-a_n)}
  \end{pmatrix}
  \begin{pmatrix}
    G_{1,1} & G_{2,1} & \cdots & G_{n-1,1}\\
    G_{1,2} & G_{2,2}  & \cdots & G_{n-1,2}\\
    \vdots &\vdots &&\vdots \\
    G_{1,n-1} & G_{2,n-1} &\cdots & G_{n-1,n-1} \\
  \end{pmatrix}=:\bm{D}_n \bm{G}_n}.
\end{align}
Since $\bm{D}_n$ is invertible, it then remains to show that $\bm{G}_n$ is invertible.

We define
\begin{align}
  B_t&:=L_t^{a_1}+\cdots +L_t^{a_{n-1}},\\
  Z_u&:=\tilde{X}_{B_u^{-1}},
\end{align}
where $(\tilde{X}_t)_{t\ge 0}$ is the process $(X_t)_{t\ge 0}$ killed upon hitting the point $a_n$. Since $(B_t)_{t\ge 0}$ is a continuous additive functional and $(\tilde{X}_t)_{t\ge 0}$ is right-continuous, $(Z_u)_{u\ge 0}$ is a continuous-time killed Markov chain with right-continuous sample paths on the finite state space $\{a_1,...,a_{n-1}\}$. The Green function of $(Z_u)_{u\ge 0}$ is
\begin{align*}
  \P_{a_i}\left[\int_0^\infty 1_{\{Z_u=a_k\}}du\right]&=\P_{a_i}\left[\int_0^\infty 1_{\{\tilde{X}_{B_u^{-1}}=a_k\}}du\right]=\P_{a_i}\left[\int_0^\infty 1_{\{\tilde{X}_{t}=a_k\}}dB_t\right]\\
  &=\P_{a_i}\left[\int_0^{T_{a_n}} 1_{\{{X}_{t}=a_k\}}dL_t^{a_k}\right]=\P_{a_i}\left[L_{T_{a_n}}^{a_k}\right]=G_{i,k}.
   \stepcounter{equation}\tag{\theequation}
\end{align*}
Therefore, since $\bm{G}_n$ is the Green matrix of $(Z_u)_{u\ge 0}$, Lemma \ref{c0} shows that it is invertible. The proof is complete.
\end{proof}

Finally, we prove Theorem \ref{A1}.

\begin{proof}[The proof of Theorem \ref{A1}]
For $k\neq n$, we have
  \begin{align*}
  \P_{x}(T_{a_k}=T_{A_{n}})&=\P_{x}(T_{a_k}<T_{a_n})-\P_{x}(T_{A_{n}\setminus \{a_k,a_n\}}<T_{a_k}<T_{a_n})\\
  &=\P_{x}(T_{a_k}<T_{a_n})-\sum_{\substack{i;\ i\le n-1\\ i\neq k}}\P_{x}(T_{a_i}=T_{A_{n}\setminus \{a_k,a_n\}}<T_{a_k}<T_{a_n})\\
  &=\P_{x}(T_{a_k}<T_{a_n})-\sum_{\substack{i;\ i\le n-1\\ i\neq k}}\P_{x}(T_{a_i}=T_{A_{n}})\P_{a_i}(T_{a_k}<T_{a_n}).
  \stepcounter{equation}\tag{\theequation}
\end{align*}
Thus, we obtain the following matrix equation:
\begin{align*}
  &{\tiny\begin{pmatrix}
    \P_{x}(T_{a_1}=T_{A_{n}})\\
    \P_{x}(T_{a_2}=T_{A_{n}})\\
    \vdots\\
    \P_{x}(T_{a_{n-1}}=T_{A_{n}})\\
  \end{pmatrix}=
  \begin{pmatrix}
    \P_x(T_{a_1}<T_{a_n})\\
     \P_x(T_{a_2}<T_{a_n})\\
    \vdots\\
    \P_x(T_{a_{n-1}}<T_{a_n})\\
  \end{pmatrix}}\\
  &\qquad {\tiny -\begin{pmatrix}
    0 & \P_{a_2}(T_{a_1}<T_{a_n}) & \cdots & \P_{a_{n-1}}(T_{a_1}<T_{a_n})\\
    \P_{a_1}(T_{a_2}<T_{a_n}) & 0  & \cdots &\P_{a_{n-1}}(T_{a_2}<T_{a_n})\\
    \vdots &\vdots &&\vdots \\
    \P_{a_1}(T_{a_{n-1}}<T_{a_n}) & \P_{a_2}(T_{a_{n-1}}<T_{a_n}) &\cdots & 0 \\
  \end{pmatrix}
  \begin{pmatrix}
    \P_{x}(T_{a_1}=T_{A_{n}})\\
    \P_{x}(T_{a_2}=T_{A_{n}})\\
    \vdots\\
    \P_{x}(T_{a_{n-1}}=T_{A_{n}})\\
  \end{pmatrix}}.
  \stepcounter{equation}\tag{\theequation}
\end{align*}
By Lemma \ref{c1}, this matrix equation can be solved, and the equation (\ref{eq1}) follows. This completes the proof of Theorem \ref{A1}.
\end{proof}


\subsection*{Transient case}
Even when the process $(X_{t})_{t\ge 0}$ is transient and satisfies Conditions \textbf{(A1)} and \textbf{(A2)}, the result can be proved by exactly the same argument as the above. However, since we use Theorem \ref{B2} instead of Theorem \ref{B1}, the entries of the diagonal matrix $\bm{D}_n$ appearing above are slightly different.

\section{The $Q$-matrix of the trace process}
\label{S4}
In this section, we prove Theorem \ref{A3}.

Assume that the condition \textbf{(A)} is satisfied and that the process $(X_t)_{t\ge 0}$ is recurrent. Since identity (\ref{aa2}) is exactly the equation (\ref{a2}), it suffices only to prove identity (\ref{aa1}).

Recall that we have defined
\begin{align}
  A_t&:=L_t^{a_1}+\cdots +L_t^{a_n},\\
  Y_u&:=X_{A_u^{-1}}.
\end{align}
The process $(Y_u)$ is a continuous-time Markov chain with right-continuous sample paths on the finite state space $\{a_1,...,a_n\}$. Let $\bm{Q}=(q_{i,j})$ be its $Q$-matrix and define the holding time:
\begin{align}
  H:=\inf\{u>0;\ Y_u\neq Y_0\}.
\end{align}

\begin{lem}
For $i=1,...,n$, it holds that
  \begin{align}
  \label{D1}
  q_{i,i}=-\bm{n}^{a_i}(T_{A_n\setminus \{a_i\}}<\infty),
\end{align}
where $\bm{n}^{a_i}$ is an excursion measure away from $a_i$.
\end{lem}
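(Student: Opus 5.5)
We prove the identity by computing the law of the holding time $H$ of $(Y_u)$ under $\P_{a_i}$ and using the standard fact that $H$ is exponential with rate $-q_{i,i}$. The idea is that, under the time change $A^{-1}$, the time the chain $Y$ spends at $a_i$ before leaving is exactly the local time $L^{a_i}$ accumulated at $a_i$ before $X$ hits $A_n\setminus\{a_i\}$.

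\textbf{Step 1: identify $H$ with a local time.} Under $\P_{a_i}$, on $[0,T_{A_n\setminus\{a_i\}})$ the process $X$ does not visit $a_j$ for $j\neq i$, so $L^{a_j}_t=0$ there and $A_t=L^{a_i}_t$. Hence $Y_u=a_i$ for $u<L^{a_i}_{T_{A_n\setminus\{a_i\}}}$. Just after the time $T:=T_{A_n\setminus\{a_i\}}$, the process sits at some $a_j\neq a_i$. By the strong Markov property at $T$ and regularity of $a_j$ for itself (condition \textbf{(A2)}), $L^{a_j}$ increases immediately after $T$. Therefore $Y$ jumps away from $a_i$ at $u=A_T=L^{a_i}_T$, which gives
\begin{align*}
  H=L^{a_i}_{T_{A_n\setminus\{a_i\}}}\qquad \P_{a_i}\text{-a.s.}
\end{align*}
This step must be done with some care. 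We must rule out that $A$ is flat on a right neighbourhood of $T$, and we must handle the right-continuous inverse correctly. Both points follow from the regularity just mentioned.

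\textbf{Step 2: distribution via excursion theory.} Run the excursions of $X$ away from $a_i$ in local time scale $l$. They form a Poisson point process with characteristic measure $\bm{n}^{a_i}$. The event $\{H>l\}$ is the event that no excursion started before local time $l$ hits $A_n\setminus\{a_i\}$. There is one further possibility to exclude: that the process is killed or escapes to infinity before local time $l$ without ever hitting that set. In the recurrent case this cannot happen. In the transient case, escape to infinity corresponds to an excursion with $T_{a_i}=\infty$, and that excursion does not hit $A_n\setminus\{a_i\}$. Then $Y$ is killed rather than jumping, and killing is accounted for separately in the $Q$-matrix (row sums are negative). Since $\eta^{a_i}$ has no drift, by \eqref{exc1} the local time is exactly parametrized by the excursion point process. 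The Poisson property then yields
\begin{align*}
  \P_{a_i}(H>l)=\exp\bigl(-l\,\bm{n}^{a_i}(T_{A_n\setminus\{a_i\}}<\infty)\bigr),
\end{align*}
possibly multiplied by a killing factor in the transient case. That factor would also contribute to $-q_{i,i}$, so in the transient case I would check how the paper counts it.

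\textbf{Step 3: conclude.} For a Markov chain with right-continuous paths, $\P_{a_i}(H>l)=e^{q_{i,i}l}$. Comparing this with the formula from Step 2 gives \eqref{D1}.

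I expect the main obstacle to be Step 1, namely rigorously matching the holding time with $L^{a_i}_{T_{A_n\setminus\{a_i\}}}$ under the right-continuous inverse. The excursion computation in Step 2 is standard once this identification is in place.
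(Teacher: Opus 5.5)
Your proof is correct and is essentially the paper's. The paper also identifies the holding time $H$ with $L^{a_i}_{T_{A_n\setminus\{a_i\}}}$; it does so only in law, citing Getoor's argument, whereas your Step~1 gives an almost sure identity. It then gets the exponential law with rate $\bm{n}^{a_i}(T_{A_n\setminus\{a_i\}}<\infty)$ from the excursion argument of Takeda--Yano's Lemma~6.3, which is your Poisson point process computation, all under that section's standing recurrent assumption.

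Your caution about the transient case is justified. There $Y$ is killed at $A_\infty<\infty$, so $H$ can also end by killing, and $-q_{i,i}$ picks up the extra term $\bm{n}^{a_i}(T_{A_n}=\infty)$. For Brownian motion with drift $\mu>0$ and the upper of two points, this term equals $\kappa=\mu$. The paper's closing remark that the transient case follows ``by exactly the same argument'' overlooks this term.
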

\begin{proof}
From the general theory of Markov chains, $H$ has an exponential distribution with rate $-q_{i,i}$. By the proof of Theorem 5.4 of Getoor \cite{G}, we have
\begin{align}
  \P_{a_i}(H>u)=\P_{a_i}(A_{H}^{-1}>A_u^{-1})=\P_{a_i}(T_{A_n\setminus \{a_i\}}>A_u^{-1})=\P_{a_i}(L_{T_{A_n\setminus \{a_i\}}}^{a_i}>u).
\end{align}
Thus, $H$ and $L_{T_{A_n\setminus \{a_i\}}}^{a_i}$ are identically distributed under $\P_{a_i}$. In the same manner as the proof of Lemma 6.3 of Takeda-Yano \cite{TY}, the random variable $L_{T_{A_n\setminus \{a_i\}}}^{a_i}$ has an exponential distribution with rate $\bm{n}^{a_i}(T_{A_n\setminus \{a_i\}}<\infty)$. Therefore, we obtain the equation (\ref{D1})
\end{proof}

Finally, we compute the right-hand side of (\ref{D1}). By reordering the indices, we may assume $i=n$. We have
\begin{align}
  \bm{n}^{a_n}(T_{A_n\setminus \{a_n\}}<\infty)&=\sum_{k=1}^{n-1}\bm{n}^{a_n}(T_{a_k}=T_{A_n})
\end{align}

\begin{lem}
\label{d1}
It holds that
  \begin{align}
  {\tiny
  \begin{pmatrix}
    \bm{n}^{a_n}(T_{a_1}=T_{A_n})\\
    \bm{n}^{a_n}(T_{a_2}=T_{A_n})\\
    \vdots\\
    \bm{n}^{a_n}(T_{a_{n-1}}=T_{A_n})
  \end{pmatrix}= \begin{pmatrix}
    1 & \P_{a_2}(T_{a_1}<T_{a_n}) & \cdots & \P_{a_{n-1}}(T_{a_1}<T_{a_n})\\
    \P_{a_1}(T_{a_2}<T_{a_n}) & 1  & \cdots &\P_{a_{n-1}}(T_{a_2}<T_{a_n})\\
    \vdots &\vdots &&\vdots \\
    \P_{a_1}(T_{a_{n-1}}<T_{a_n}) & \P_{a_2}(T_{a_{n-1}}<T_{a_n}) &\cdots & 1 
  \end{pmatrix}^{-1}
  \begin{pmatrix}
    \bm{n}^{a_n}(T_{a_1}<T_{a_n})\\
    \bm{n}^{a_n}(T_{a_2}<T_{a_n})\\
    \vdots\\
    \bm{n}^{a_n}(T_{a_{n-1}}<T_{a_n})
  \end{pmatrix}}.
\end{align}
\normalsize  
\end{lem}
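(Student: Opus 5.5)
The plan is to copy the proof of Theorem \ref{A1}, replacing the starting law $\P_x$ by the excursion measure $\bm{n}^{a_n}$. The decomposition used there is a statement about a single path together with the strong Markov property at a hitting time, and under $\bm{n}^{a_n}$ the strong Markov property holds for the excursion at stopping times $T$ with $T>0$: conditionally on $\F_T$, the post-$T$ path is governed by $\P_{X_T}$.

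First, fix $k\le n-1$. Under $\bm{n}^{a_n}$ the excursion is killed at $T_{a_n}$, so every event below lives on $\{T_{A_n\setminus\{a_n\}}<T_{a_n}\}$. Split the event $\{T_{a_k}<T_{a_n}\}$ according to which point of $A_n\setminus\{a_n\}$ is hit first. This gives
\begin{align*}
  \bm{n}^{a_n}(T_{a_k}<T_{a_n})=\bm{n}^{a_n}(T_{a_k}=T_{A_n})+\sum_{\substack{i;\ i\le n-1\\ i\neq k}}\bm{n}^{a_n}(T_{a_i}=T_{A_n},\ T_{a_k}<T_{a_n}).
\end{align*}
Since the points are distinct, ties between different $a_i$ cannot occur, so the events are disjoint.

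Second, apply the strong Markov property of $\bm{n}^{a_n}$ at the stopping time $T_{A_n}$. This time is strictly positive $\bm{n}^{a_n}$-a.e., because excursions start at $a_n$ and leave it immediately. On $\{T_{a_i}=T_{A_n}<\infty\}$ we have $X_{T_{A_n}}=a_i$. Moreover, $T_{a_k}<T_{a_n}$ after time $T_{A_n}$ is exactly the shifted event $\{T_{a_k}<T_{a_n}\}\circ\theta_{T_{A_n}}$. Hence
\begin{align*}
  \bm{n}^{a_n}(T_{a_i}=T_{A_n},\ T_{a_k}<T_{a_n})=\bm{n}^{a_n}(T_{a_i}=T_{A_n})\,\P_{a_i}(T_{a_k}<T_{a_n}).
\end{align*}
This yields the linear system
\begin{align*}
  \bm{n}^{a_n}(T_{a_k}<T_{a_n})=\sum_{i=1}^{n-1}\P_{a_i}(T_{a_k}<T_{a_n})\,\bm{n}^{a_n}(T_{a_i}=T_{A_n}),
\end{align*}
where the diagonal term is counted with coefficient $1$. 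Its matrix is exactly (\ref{C1}). Lemma \ref{c1} says (\ref{C1}) is invertible, and inverting it gives the claim.

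The main obstacle is the strong Markov property under the $\sigma$-finite measure $\bm{n}^{a_n}$. I would use the standard form from excursion theory (Section IV of \cite{Ber}): for a stopping time $T$ with $T>0$ and functionals $F,G\ge0$,
\begin{align*}
  \bm{n}[F\,1_{\{T<\zeta\}}\,G\circ\theta_T]=\bm{n}[F\,1_{\{T<\zeta\}}\,\P_{X_T}[G]].
\end{align*}
The right-hand sides are finite because $\bm{n}^{a_n}(T_{a_k}<T_{a_n})\le\bm{n}^{a_n}(T_{a_k}<\infty)$. In the recurrent case this equals $1/h^B(a_k-a_n)<\infty$ by Theorem \ref{B1}; in the transient case the bound comes from Theorem \ref{B2}. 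So no $\infty-\infty$ issues arise, and the subtraction step in the proof of Theorem \ref{A1} can be carried over verbatim. The transient case is otherwise identical.
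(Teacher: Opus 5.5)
Your proposal is correct and is essentially the paper's proof. You decompose $\{T_{a_k}<T_{a_n}\}$ according to the first point of $A_n\setminus\{a_n\}$ visited, apply the strong Markov property of $\bm{n}^{a_n}$ at that first entrance time to get the linear system with matrix (\ref{C1}), and invert it by Lemma \ref{c1}. Writing the identity additively and checking $\bm{n}^{a_n}(T_{a_k}<T_{a_n})<\infty$ via Theorems \ref{B1} and \ref{B2} is only a slightly more careful variant of the paper's subtraction step.
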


\begin{Rem}
  By Theorem \ref{B1}, the hitting measures $\bm{n}^{a_n}(T_{a_i}=T_{A_n})$ for $i=1,...,n-1$ can be represented solely by the renormalized zero resolvent $h$.
\end{Rem}

\begin{proof}[Proof of Lemma \ref{d1}]
By the strong Markov property of $\bm{n}^{a_n}$ (see, e.g., Theorem III.3.28 of \cite{Blu}), we have
\begin{align*}
  \bm{n}^{a_n}(T_{a_k}=T_{A_n})&=\bm{n}^{a_n}(T_{a_k}<T_{a_n})-\bm{n}^{a_n}(T_{A_n\setminus \{a_n,a_k\}}<T_{a_k}<T_{a_n})\\
  &=\bm{n}^{a_n}(T_{a_k}<T_{a_n})-\sum_{\substack{j;\ j\le n-1\\ j\neq k}}\bm{n}^{a_n}(T_{a_j}=T_{A_n\setminus \{a_n,a_k\}}<T_{a_k}<T_{a_n})\\
  &=\bm{n}^{a_n}(T_{a_k}<T_{a_n})-\sum_{\substack{j;\ j\le n-1\\ j\neq k}}\bm{n}^{a_n}(T_{a_j}=T_{A_n})\P_{a_j}(T_{a_k}<T_{a_n}).
  \stepcounter{equation}\tag{\theequation}
\end{align*}
Thus, we obtain the following matrix equation:
\begin{align*}
&{\tiny
\begin{pmatrix}
    \bm{n}^{a_n}(T_{a_1}=T_{A_n})\\
    \bm{n}^{a_n}(T_{a_2}=T_{A_n})\\
    \vdots\\
    \bm{n}^{a_n}(T_{a_{n-1}}=T_{A_n})
  \end{pmatrix}=
  \begin{pmatrix}
    \bm{n}^{a_n}(T_{a_1}<T_{a_n})\\
    \bm{n}^{a_n}(T_{a_2}<T_{a_n})\\
    \vdots\\
    \bm{n}^{a_n}(T_{a_{n-1}}<T_{a_n})
  \end{pmatrix}}\\
  &\qquad {\tiny-\begin{pmatrix}
    0 & \P_{a_2}(T_{a_1}<T_{a_n}) & \cdots & \P_{a_{n-1}}(T_{a_1}<T_{a_n})\\
    \P_{a_1}(T_{a_2}<T_{a_n}) & 0  & \cdots &\P_{a_{n-1}}(T_{a_2}<T_{a_n})\\
    \vdots &\vdots &&\vdots \\
    \P_{a_1}(T_{a_{n-1}}<T_{a_n}) & \P_{a_2}(T_{a_{n-1}}<T_{a_n}) &\cdots & 0 \\
  \end{pmatrix}
  \begin{pmatrix}
    \bm{n}^{a_n}(T_{a_1}=T_{A_n})\\
    \bm{n}^{a_n}(T_{a_2}=T_{A_n})\\
    \vdots\\
    \bm{n}^{a_n}(T_{a_{n-1}}=T_{A_n})
  \end{pmatrix}}.
  \stepcounter{equation}\tag{\theequation}
  \end{align*}
By Lemma \ref{c1}, this matrix equation can be solved, and hence the claim follows.
\end{proof}

This completes the proof of Theorem \ref{A3}.

\begin{Rem}
\label{GR}
Getoor \cite{G} considered a trace process which is killed at an exponential time measured in local time, and give an approximation of $\bm{Q}$ by the $Q$-matrix of the killed trace process. Assume that $(X_t)_{t\ge 0}$ is a Hunt process, and that each point $a_i$ is regular for itself.
Moreover, for $\lambda>0$, define the matrix $\bm{U}_A^\lambda=(U_A^\lambda(i,j))$ by the following:
\begin{align}
  U_A^\lambda(i,j):=\P_{a_i}\left[\int_0^\infty e^{-\lambda t} dL_t^{a_j}\right].
\end{align}
(Note that under our assumptions on \Levy\ processes, this coincides with the resolvent density $r_\lambda(a_j-a_i)$). Then the matrix $\bm{U}_A^\lambda$ is invertible. See Theorem 2.4 of Getoor \cite{G}. Let $-\bm{Q}^\lambda$ denotes the inverse matrix of $\bm{U}_A^\lambda$. In this setting, he obtained the following theorem.

\begin{thm}[Proposition 5.3 of Getoor \cite{G}]
\label{aa3}
It holds that
\begin{align}
  \lim_{\lambda\to 0+}\bm{Q}^\lambda=\bm{Q}.
\end{align}
\end{thm}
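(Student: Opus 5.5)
The approach is to show that $-\bm{Q}^\lambda$, the $\lambda$-killed analogue of $\bm{Q}$, has entries that converge one by one to those of $-\bm{Q}$. I first identify $\bm{Q}^\lambda$ as the exact $Q$-matrix of the trace process killed at an independent exponential time. Consider the trace process $(Y_u)$ killed at rate $\lambda$ in real time. Under the time change $t=A_u^{-1}$ this is the chain $Y^\lambda_u:=Y_u$ for $u<A_{\mathbf e_\lambda}$, where $\mathbf e_\lambda$ is an independent exponential variable with rate $\lambda$. Its Green matrix is
\begin{align*}
\P_{a_i}\Big[\int_0^{\infty} 1_{\{Y_u=a_j,\ A_u^{-1}<\mathbf e_\lambda\}}\,du\Big]
=\P_{a_i}\Big[\int_0^\infty e^{-\lambda t}1_{\{X_t=a_j\}}\,dA_t\Big]
=\P_{a_i}\Big[\int_0^\infty e^{-\lambda t}\,dL_t^{a_j}\Big]=U_A^\lambda(i,j).
\end{align*}
This computation is the same as the one in the proof of Lemma \ref{c1}, and it uses that $dL^{a_j}$ is carried by $\{X_t=a_j\}$. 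The killed chain has almost surely finite lifetime, so Lemma \ref{c0} applies: its $Q$-matrix equals $-(\bm{U}_A^\lambda)^{-1}=\bm{Q}^\lambda$.

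Next I would compute the entries of $\bm{Q}^\lambda$ by excursion theory, exactly as for $\bm{Q}$. Repeating the argument of the lemma giving (\ref{D1}) with killing included, the holding time at $a_i$ is $L^{a_i}_{T_{A_n\setminus\{a_i\}}\wedge \mathbf e_\lambda}$. This variable is exponential with rate
\begin{align*}
\bm{n}^{a_i}\big(T_{A_n\setminus\{a_i\}}<\infty \text{ or } \mathbf e_\lambda<T_{a_i}\big)
=\bm{n}^{a_i}\big[1-e^{-\lambda T_{a_i}}1_{\{T_{A_n\setminus\{a_i\}}=\infty\}}\big].
\end{align*}
Hence $-q^\lambda_{i,i}$ equals the expression above. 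For $j\ne i$, the jump rate to $a_j$ is $q^\lambda_{i,j}=\bm{n}^{a_i}\big[e^{-\lambda T_{a_j}};\,T_{a_j}=T_{A_n\setminus\{a_i\}}<\infty\big]$. Here I would use the exponential formula for the Poisson point process of excursions from $a_i$, together with the fact that $L^{a_i}$ has no drift part on the zero set, which is guaranteed by the normalization (\ref{exc1}).

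Finally, I pass to the limit $\lambda\to0+$. For $j\neq i$, monotone convergence gives $q^\lambda_{i,j}\to\bm{n}^{a_i}(T_{a_j}=T_{A_n\setminus\{a_i\}}<\infty)=q_{i,j}$; the last identity follows from (\ref{a2}) and (\ref{D1}) by a first-excursion decomposition. For the diagonal entry, the integrand $1-e^{-\lambda T_{a_i}}1_{\{\cdots\}}$ decreases to $1_{\{T_{A_n\setminus\{a_i\}}<\infty\}}+1_{\{T_{A_n\setminus\{a_i\}}=\infty,\,T_{a_i}=\infty\}}$.

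The main obstacle is to justify dominated convergence here, since $\bm{n}^{a_i}$ is an infinite measure. The integrand is dominated by $1-e^{-\lambda_0 T_{a_i}}+1_{\{T_{A_n\setminus\{a_i\}}<\infty\}}$ for some fixed $\lambda_0$. This bound is $\bm{n}^{a_i}$-integrable: the first term has finite integral by (\ref{exc1}), and the second because $\bm{n}^{a_i}(T_{a}<\infty)<\infty$ by Theorems \ref{B1} and \ref{B2}. In the limit, the extra term $\bm{n}^{a_i}(T_{a_i}=\infty)$ is $0$ in the recurrent case. In the transient case it equals $\kappa$, which is consistent with $\bm{Q}$ then carrying a killing rate; in that case (\ref{D1}) should be read with $T_{A_n}$ in place of $T_{A_n\setminus\{a_i\}}$ to include killing. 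In both cases $\bm{Q}^\lambda\to\bm{Q}$ entrywise, which proves Theorem \ref{aa3}.
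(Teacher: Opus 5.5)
The paper gives no proof of this statement. It is quoted from Getoor \cite{G}, where it is proved for a general Hunt process with each $a_i$ regular for itself, and it serves only as background in Remark \ref{GR}. Your argument is therefore a genuinely different, self-contained route, and in the paper's L\'evy setting it is correct.

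The discounting in $\bm{U}_A^\lambda$ is in real time, so $\bm{U}_A^\lambda$ is the Green matrix of the trace of the $\lambda$-killed process. Lemma \ref{c0} then identifies $\bm{Q}^\lambda$ as its $Q$-matrix. For the entries, the exponential formula alone is not quite enough: you should say explicitly that, by memorylessness, the exponential clock can be realized as independent marks on the excursions from $a_i$. The exit from $a_i$ is then the first marked excursion in a set of finite intensity. This is what justifies $-q^\lambda_{i,i}=\bm{n}^{a_i}[1-e^{-\lambda T_{a_i}}1_{\{T_{A_n\setminus\{a_i\}}=\infty\}}]$ and $q^\lambda_{i,j}=\bm{n}^{a_i}[e^{-\lambda T_{a_j}};\,T_{a_j}=T_{A_n\setminus\{a_i\}}<\infty]$.

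Compared with citing Getoor, your route gives explicit excursion formulas for every entry of $\bm{Q}^\lambda$, and the limit then reduces to monotone convergence. The price is that you use the absence of drift in the inverse local time. This holds here because the process spends zero Lebesgue time at $a_i$, but it fails at sticky points of a general Hunt process. There $-q^\lambda_{i,i}$ picks up an extra term $\lambda\delta_i$, which tends to $0$. So the argument extends, but as written it proves only the L\'evy case.

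Two details need repair.

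\emph{Domination.} You do not need Theorems \ref{B1} and \ref{B2} here. The integrand $f_\lambda:=1-e^{-\lambda T_{a_i}}1_{\{T_{A_n\setminus\{a_i\}}=\infty\}}$ decreases as $\lambda\downarrow 0$. Hence $f_{\lambda_0}$ dominates it for $\lambda\le\lambda_0$, and $\bm{n}^{a_i}[f_{\lambda_0}]=-q^{\lambda_0}_{i,i}<\infty$.

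\emph{Transient case.} The additional limit term is $\bm{n}^{a_i}(T_{A_n\setminus\{a_i\}}=\infty,\ T_{a_i}=\infty)$, not $\bm{n}^{a_i}(T_{a_i}=\infty)=\kappa$, since an escaping excursion may still hit some $a_j$. Also, reading (\ref{D1}) ``with $T_{A_n}$ in place of $T_{A_n\setminus\{a_i\}}$'' has no literal meaning under $\bm{n}^{a_i}$. The clean fix is to bypass (\ref{a2}) and (\ref{D1}) entirely and run your own excursion computation with $\lambda=0$. It gives $q_{i,j}=\bm{n}^{a_i}(T_{a_j}=T_{A_n\setminus\{a_i\}}<\infty)$ and $-q_{i,i}=\bm{n}^{a_i}(T_{A_n\setminus\{a_i\}}<\infty)+\bm{n}^{a_i}(T_{A_n\setminus\{a_i\}}=\infty,\ T_{a_i}=\infty)$. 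These are exactly your limits, in both the recurrent and transient cases.
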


Since $-\bm{Q}^\lambda$ is the inverse matrix of $\bm{U}_A^\lambda$, each entry of $\bm{Q}^\lambda$ can be obtained using the resolvent. Hence, each entry of $\bm{Q}$ can be computed. However, evaluating the above limit is generally hard. Indeed, in Theorem 6.5 of the same paper, he applies the result to \Levy\ processes, but in order to compute the limit he imposes additional assumptions and carries out detailed, step-by-step calculations. 
\end{Rem}

\subsection*{Transient case}
Even when the process $(X_t)_{t\ge 0}$ is transient and satisfies Conditions \textbf{(A1)} and \textbf{(A2)}, the result can be proved by exactly the same argument as the above. However, note that when carrying out explicit computations, we should use Theorem \ref{B2} instead of Theorem \ref{B1}.


\section{Examples}
\label{S5}
In this section, we compute the $Q$-matrices of the trace process for the cases $n=2,3$, and provide explicit expressions for this matrix in the cases of Brownian motion, stable processes, and spectrally negative Lévy processes.

For simplicity, we assume that the process is recurrent and that $a_1<a_2<a_3$. For $n=2,3$, we denote the $Q$-matrix $\bm{Q}$ of the trace process as $\bm{Q}^{(2)}$ and $\bm{Q}^{(3)}$, respectively. By Theorem \ref{A3}, we have
\begin{align}
  \bm{Q}^{(2)}&= \frac{1}{h^B(a_1-a_2)}\begin{pmatrix}
   -1&1\\
   1&-1
  \end{pmatrix},\\
  \bm{Q}^{(3)}&=\begin{pmatrix}
    q_{1,1}^{(3)}& q_{1,2}^{(3)} &q_{1,3}^{(3)}\\
     q_{2,1}^{(3)}& q_{2,2}^{(3)} &q_{2,3}^{(3)}\\
      q_{3,1}^{(3)}& q_{3,2}^{(3)} &q_{3,3}^{(3)}
  \end{pmatrix},
\end{align}
with 
\begin{align}
    q_{1,1}^{(3)}= -\left(\frac{1+\P_{a_2}(T_{a_3}<T_{a_1})}{h^B(a_1-a_2)}+\frac{1+\P_{a_3}(T_{a_2}<T_{a_1})}{h^B(a_1-a_3)}\right),\\
    q_{2,2}^{(3)}=-\left(\frac{1+\P_{a_1}(T_{a_3}<T_{a_2})}{h^B(a_2-a_1)}+\frac{1+\P_{a_3}(T_{a_1}<T_{a_2})}{h^B(a_2-a_3)}\right),\\
    q_{3,3}^{(3)}=-\left(\frac{1+\P_{a_1}(T_{a_2}<T_{a_3})}{h^B(a_3-a_1)}+\frac{1+\P_{a_2}(T_{a_1}<T_{a_3})}{h^B(a_3-a_2)}\right),
\end{align}
and 
\begin{align}
      q_{i,j}^{(3)}=-q_{i,i}\P_{a_i}(T_{a_j}<T_{a_k})\qquad (\text{for}\ i\neq j\ \text{with}\ \{a_i,a_j,a_k\}=\{a_1,a_2,a_3\}).
\end{align}

\begin{Exa}
Assume $(X_t)_{t\ge 0}$ is a Brownian motion. Using the formulas of Example \ref{Exa1}, we have
\begin{align}
  \bm{Q}^{(2)}&= \frac{1}{2(a_2-a_1)}\begin{pmatrix}
   -1&1\\
   1&-1
  \end{pmatrix},\\
  \bm{Q}^{(3)}&= \begin{pmatrix}
    -\frac{1}{2(a_2-a_1)} & \frac{1}{2(a_2-a_1)} & 0\\
  \frac{1}{2(a_2-a_1)} & -\left(\frac{1}{2(a_2-a_1)}+\frac{1}{2(a_3-a_2)}\right) & \frac{1}{2(a_3-a_2)} \\
  0 & \frac{1}{2(a_3-a_2)} & -\frac{1}{2(a_3-a_2)}
  \end{pmatrix}.
\end{align}
\end{Exa}

\begin{Exa}
Assume $(X_t)_{t\ge 0}$ is a strictly $\alpha$-stable process. Using the formulas of Example \ref{Exa2}, we have
\begin{align}
  \bm{Q}^{(2)}&= \frac{K(\alpha)}{2(a_2-a_1)^{\alpha-1}}\begin{pmatrix}
   -1&1\\
   1&-1
  \end{pmatrix},\\
  \bm{Q}^{(3)}&=\begin{pmatrix}
    \varphi_1(A,B,C) &\varphi_2^+(A,B,C) &\varphi_3^+(A,B,C)\\
    \varphi_4^-(A,B,C) &\varphi_1(A,C,B) &\varphi_4^+(B,A,C)\\
    \varphi_3^-(B,A,C) &\varphi_2^-(B,A,C) &\varphi_1(B,A,C)\\
  \end{pmatrix}
\end{align}
where
\begin{align}
  A:=(a_2-a_1)^{\alpha-1},\qquad B:=(a_3-a_2)^{\alpha-1},\qquad C:=(a_3-a_1)^{\alpha-1},
\end{align}
and 
\begin{align}
  \varphi_1(A,B,C)&:= -\frac{K(\alpha)}{2}\cdot\frac{2A-B+2C}{AC},\\
  \varphi_2^\pm (A,B,C)&:=\frac{K(\alpha)}{4ABC}(2A-B+2C)\Big((1\pm\beta)(C-A)+(1\mp\beta)B\Big),\\
  \varphi_3^\pm (A,B,C) &:=\frac{K(\alpha)}{4ABC}(2A-B+2C)(1\pm\beta)(A+B-C),\\
  \varphi_4^\pm (A,B,C)&:= \frac{K(\alpha)}{4ABC}(2A+2B-C)\Big((1\pm \beta)(C-A)+(1\mp\beta)B\Big).
\end{align}
\end{Exa}

\begin{Exa}
Assume $(X_t)_{t\ge 0}$ is a recurrent spectrally negative \Levy\ process. Using the formulas of Example \ref{Exa3}, we have
\begin{align}
  \bm{Q}^{(2)}&= \frac{1}{W(a_2-a_1)}\begin{pmatrix}
   -1&1\\
   1&-1
  \end{pmatrix},\\
  \bm{Q}^{(3)}&=\begin{pmatrix}
  -\frac{1}{W(a_2-a_1)}&\frac{1}{W(a_2-a_1)}&0\\
  \frac{W(a_3-a_1)-W(a_2-a_1)}{W(a_2-a_1)W(a_3-a_2)} & -\frac{W(a_3-a_1)}{W(a_2-a_1)W(a_3-a_2)} &\frac{1}{W(a_3-a_2)}\\
  \frac{W(a_2-a_1)+W(a_3-a_2)-W(a_3-a_1)}{W(a_2-a_1)W(a_3-a_2)}&\frac{W(a_3-a_1)-W(a_3-a_2)}{W(a_2-a_1)W(a_3-a_2)}&-\frac{1}{W(a_3-a_2)}
  \end{pmatrix}.
\end{align}
\end{Exa}


\section*{Acknowledgements}
The author would like to thank Professor Kouji Yano for his careful guidance and great support. This work was supported by JST SPRING, Grant Number JPMJSP2138. 

\bibliographystyle{plain}

\end{document}